\newcommand{\N}{\ensuremath{\mathbb{N}}}
\newcommand{\T}{\ensuremath{\mathbb{T}}}
\newcommand{\NZ}{\ensuremath{\mathbb{N}_{0}}}
\newcommand{\Z}{\ensuremath{\mathbb{Z}}}
\newcommand{\R}{\ensuremath{\mathbb{R}}}
\newcommand{\C}{\ensuremath{\mathbb{C}}}
\newcommand{\ii}{\textnormal{i}}
\newcommand{\e}{\textnormal{e}}
\newcommand{\eip}[1]{\textnormal{e}^{2\pi\ii{#1}}}
\newcommand{\norm}[1]{\left\Vert #1\right\Vert}
\newcommand{\pmat}[1]{\begin{pmatrix} #1 \end{pmatrix}}
\newcommand{\set}[1]{\left\{ #1 \right\}}
\newcommand{\abs}[1]{\left| #1 \right |}
\DeclareMathOperator*{\diag}{diag}
\DeclareMathOperator*{\maxdeg}{maxdeg}
\DeclareMathOperator*{\sep}{sep}
\newcommand{\cond}{\mathrm{cond}}
\newtheorem{thm}{Theorem}[section]
\newtheorem{lemma}[thm]{Lemma}
\newtheorem{remark}[thm]{Remark}
\newtheorem{definition}[thm]{Definition}
\newtheorem{example}[thm]{Example}
\newtheorem{corollary}[thm]{Corollary}
\newtheorem{proposition}[thm]{Proposition}
\numberwithin{equation}{section}
\numberwithin{table}{section}
\numberwithin{figure}{section}
\newcommand{\bend}{\hspace*{0ex} \hfill \hbox{\vrule height
    1.5ex\vbox{\hrule width 1.4ex \vskip 1.4ex\hrule  width 1.4ex}\vrule
    height 1.5ex}}
\long\def\symbolfootnote[#1]#2{\begingroup%
\def\thefootnote{\fnsymbol{footnote}}\footnote[#1]{#2}\endgroup}
\newcommand{\calA}{\mathcal{A}} % "\AA" already defined
\newcommand{\dd}{\mathrm{d}}
\newcommand{\OO}[1]{\mathcal{O}\left(#1\right)}
\renewcommand{\mathbf}[1]{\ensuremath{\boldsymbol{#1}}}
\newcommand{\rank}{ \operatorname{rank}}
\newcommand{\sspan}{\operatorname{span}}
\renewcommand{\thefootnote}{\fnsymbol{footnote}}
\title{A multivariate generalization of Prony's~method}
\date{\today}
\date{}
\author{Stefan Kunis\footnotemark[2]\space\footnotemark[3]
\and Thomas Peter\footnotemark[2]
\and Tim R\"omer\footnotemark[2]
\and Ulrich von der Ohe\footnotemark[2]}
\begin{document}

\maketitle

\begin{abstract}

Prony's method is a prototypical eigenvalue analysis based method
for the reconstruction of a finitely
supported complex measure
on the unit circle
from its moments up to a certain degree.
In this note, we give a generalization of this method to the
multivariate case and prove simple conditions
under which the problem admits a unique solution. % TODO: correct formulation /U
Provided the order of the moments is bounded from below by the number of points
on which the measure is supported
as well as by a small constant divided by the separation distance of these points,
stable reconstruction is guaranteed.
In its simplest form, the reconstruction method consists
of setting up a certain multilevel Toeplitz matrix of the moments,
compute a basis of its kernel,
and
compute by some method of choice the set of common roots of the multivariate polynomials
whose coefficients are given in the second step.
All theoretical results are illustrated by numerical experiments.

\medskip

\noindent\textit{Key words and phrases} :
frequency analysis,
spectral analysis,
exponential sum,
moment problem,
super-resolution.
\medskip

\noindent\textit{2010 AMS Mathematics Subject Classification} : \text{
65T40, % Trigonometric approximation and interpolation
42C15, %General harmonic expansions, frames
30E05, %Functions of a complex variable, Moment problems, interpolation problems
65F30 % other matrix algorithms
}
\end{abstract}

\footnotetext[2]{
  Osnabr\"uck University, Institute of Mathematics
  \texttt{\{skunis,petert,troemer,uvonderohe\}@uos.de}
}

\footnotetext[3]{
  Helmholtz Zentrum M\"unchen, Institute of Computational Biology
}

%%%%%%%%%%%%%%%%%%%%%%%%%%%%%%%%%%%%%%%%%%%%%%%%%%%%%%%%%%%%%%%%%%%%%%%%%%%%%%
\section{Introduction}
\label{sect:Einleitung}
In this paper we propose a generalization of de~Prony's classical method~\cite{Pr95}
for the parameter and coefficient reconstruction of univariate finitely supported complex measures
to a finite number of variables.
The method of de~Prony lies at the core of seemingly different classes of problems in signal processing
such as spectral estimation, search for an annihilating filter, deconvolution, spectral extrapolation,
and moment problems.
Thus we provide a new tool to analyze multivariate versions
of a broad set of problems.

To recall the machinery of the classical Prony method
let $\C_\ast:=\C\setminus\{0\}$
and let $\hat{f}_j\in\C_\ast$
and pairwise distinct $z_j\in\C_\ast$, $j=1,\dots,M$, be given.
Let $\delta_{z_j}$ denote the Dirac measure in $z_j$ on $\C_\ast$
and let
\begin{equation*}
  \mu=\sum_{j=1}^M\hat{f}_j\delta_{z_j}
\end{equation*}
be a finitely supported complex measure on $\C_\ast$.
By the Prony method the
$\hat{f}_j$ and $z_j$
are reconstructed from~$2M+1$ moments
\begin{equation*}
  f(k)=\int_{\C_\ast}x^k\dd\mu(x)
  =\sum_{j=1}^M\hat{f}_jz_j^k,
  \quad
  k=-M,\dots,M.
\end{equation*}
Since the coefficients $\hat{p}_\ell\in\C$, $\ell=0,\dots,M$,
of the (not a~priori known) so-called Prony polynomial
\begin{equation*}
  p(Z):=\prod_{j=1}^M(Z-z_j)
  =\sum_{\ell = 0}^M\hat{p}_{\ell}Z^\ell
\end{equation*}
fulfill the linear equations
\begin{equation*}
  \sum_{\ell = 0}^M\hat{p}_\ell f(\ell-m)
  =\sum_{j=1}^M\hat{f}_jz_j^{-m}\sum_{\ell=0}^M\hat{p}_\ell z_j^\ell
  =\sum_{j=1}^M\hat{f}_jz_j^{-m}p(z_j)
  =0,
  \quad
  m=0,\dots,M,
\end{equation*}
and are in fact the unique solution to this system with $\hat{p}_M=1$,
reconstruction of the $z_j$ is possible by computing the kernel vector
$(\hat{p}_1,\dots,\hat{p}_{M-1},1)$ of the rank-$M$ Toeplitz matrix
\begin{equation*}
  T:=(f(k-\ell))_{\substack{\ell=0,\dots,M\\k=0,\dots,M}}\in\C^{M+1\times M+1}
\end{equation*}
and, knowing this to be the coefficient vector of~$p$, compute the roots $z_j$ of $p$.
Afterwards, the coefficients $\hat{f}_j$ of $f$ (that did not enter the discussion until now)
can be uniquely recovered by solving a Vandermonde linear system of equations.
When attempting to generalize this method to finitely supported complex measures on $\C_\ast^d$,
it seems natural to think that the unknown parameters $z_j\in\C_\ast^d$
could be realized as roots of $d$-variate polynomials,
and this is the approach we will follow here.
As in the univariate case,
the coefficients will be given as solution to a suitably constructed
system of linear equations.
However, for $d\ge2$, an added difficulty lies in the fact that
a non-constant polynomial always has uncountably many complex roots,
so that a single polynomial cannot be sufficient to identify the parameters as its roots.
A natural way to overcome this problem is to consider the common roots
of a (finite) set of polynomials.
These sets, commonly called algebraic varieties, are the subject of classical algebraic geometry
and thus there is an immense body of algebraic literature on this topic from which we need only some
basic notions as provided at the end of Section~\ref{sect:pre}.

Our main results are presented in Section~\ref{sect:main}, which is divided into three parts.
In Section~\ref{subsect:algebraic} we prove sufficient conditions to guarantee parameter reconstruction
for multivariate exponential sums by constructing a set of multivariate polynomials such that their common roots are precisely the parameters.
In Section~\ref{subsect:trigonometric} we focus on the case
that the parameters $z_j$ lie on the $d$-dimensional torus,
which allows us to prove numerical stability, provide some implications on the parameter distribution, and construct a single trigonometric polynomial
localized at the parameters. Finally, we state a prototypical algorithm of the multivariate Prony method.

In Section~\ref{sect:ErstesKapitel} we discuss previous approaches towards the multivariate moment problem
as can be found in~\cite{JiSiBe01,AnCaHo10} for generic situations, in~\cite{PoTa132,PlWi13,DiIs15} based on projections of the measure,
and in~\cite{CaFe14,CaFe13,BeDeFe15a,BeDeFe15b} based on semidefinite optimization.
Finally, numerical examples are presented in Section~\ref{sect:num}
and we conclude the paper with a summary in Section~\ref{sect:sum}.

%%%%%%%%%%%%%%%%%%%%%%%%%%%%%%%%%%%%%%%%%%%%%%%%%%%%%%%%%%%%%%%%%%%%%%%%%%%%%%
\section{Preliminaries}\label{sect:pre}
Throughout the paper, the letter $d\in\N$ always denotes the dimension, $\C_\ast:=\C\setminus\set{0}$, and
we let
\begin{equation*}
\C_\ast^d:=(\C_\ast)^d=\{z\in\C^d:\text{$z_\ell\ne0$ for all $\ell=1,\dots,d$}\}
\end{equation*}
be the domain for our parameters. For $z\in\C_\ast^d$, $k\in\Z^d$, we use the multi-index notation
$z^k:=z_1^{k_1}\dotsm z_d^{k_d}$.
We also let $\T:=\{z\in\C:\abs{z}=1\}$ and the $d$-fold Cartesian product $\T^d$
is called \emph{$d$-dimensional torus}.
We start by defining the object of our interest,
that is, multivariate exponential sums, as a natural generalization of univariate exponential sums.
\begin{definition}\label{def:mvexpsum}
A function $f\colon\Z^d\to\C$
is a \emph{$d$-variate exponential sum}
if
there is a finitely supported complex measure $\mu$ on $\C_\ast^d$,
such that for all $k\in\Z^d$, $f(k)$ is the $k$-th moment of $\mu$,
% for all $k\in\Z^d$,
% $f(k)$ is the $k$-th moment of some finitely supported complex measure $\mu$ on $\C_\ast^d$,
that is,
if there are $M\in\N$,
$\hat{f}_1,\dots,\hat{f}_M\in\C_\ast$,
and pairwise distinct $z_1,\dots,z_M\in\C_\ast^d$
such that with $\mu:=\sum_{j=1}^M\hat{f}_j\delta_{z_j}$
we have
\begin{equation*}
  f(k)=\int_{\C_\ast^d}x^k\dd\mu(x)
  =\sum_{j=1}^M\hat{f}_jz_j^k
\end{equation*}
for all $k\in\Z^d$.

In that case $M$, $\hat{f}_j$, and $z_j$, $j=1,\dots,M$,
are uniquely determined,
and $f$ is called~\emph{$M$-sparse},
the $\hat{f}_j$ are called \emph{coefficients} of $f$,
and $z_j$ are called \emph{parameters} of $f$.
The set of parameters of $f$ is denoted by $\Omega_f$
or, if there is no danger of confusion, simply by $\Omega$.
\end{definition}
\begin{remark}\label{rem:trigMoments}
Let $\hat{f}_j\in\C_\ast$
and pairwise distinct $t_j\in[0,1)^d$, $j=1,\dots,M$,
be given.
Then the trigonometric moment sequence of
$\tau=\sum_{j=1}^M\hat{f}_j\delta_{t_j}$,
\begin{equation*}
f\colon\Z^d\to\C,
\quad
k\mapsto\int_{[0,1)^d}\eip{kt}\dd\tau(t)
=\sum_{j=1}^M\hat{f}_j\eip{kt_j},
\end{equation*}
(where $kt$ denotes the scalar product of $k$ and $t$)
is a $d$-variate exponential sum
with parameters $\eip{t_j}=(\eip{t_{j,1}},\dots,\eip{t_{j,d}})\in\T^d$.
This case will be analyzed in detail in Section~\ref{subsect:trigonometric}.
\end{remark}
Let $f\colon\Z^d\to\C$ be an $M$-sparse $d$-variate exponential sum
with coefficients $\hat{f}_j\in\C_\ast$
and parameters $z_j\in\C_\ast^d$, $j=1,\dots,M$.
Our objective is to reconstruct the coefficients and parameters of $f$
given an upper bound $n$ for $M$
and a finite set of samples of $f$
at a subset of $\Z^d$
that depends only on $n$, see also~\cite{PePlSc15}.

The following notations will be used throughout the paper.
For $n\in\N$, let $I_n:=\{0,\dots,n\}^d$
and let $N:=\lvert I_n \rvert=(n+1)^d$.
The multilevel Toeplitz matrix
\begin{equation*}
  T_n(f):=\left(f(k-\ell)\right)_{k,\ell\in I_n}\in\C^{N\times N},
\end{equation*}
which we also refer to as $T_n$,
will play a crucial role in the multivariate Prony method.
Note that the entries of $T_n$ are sampling values of $f$
at a grid of $\lvert I_n-I_n\rvert=(2n+1)^d$ points.

Next we establish the crucial link
between the matrix $T_n$
and the roots of multivariate polynomials.
To this end,
let
\begin{equation*}
  \Pi:=\C[Z_1,\dots,Z_d]
  =\{\sum_{k\in F}p_kZ_1^{k_1}\dotsm Z_d^{k_d}:\text{$F\subset\NZ^d$ finite, $p_k\in\C$}\}.
\end{equation*}
denote the $\C$-algebra of $d$-variate polynomials and for
$p=\sum_kp_kZ_1^{k_1}\dotsm Z_d^{k_d}\in\Pi\setminus\{0\}$ let
\begin{equation*}
  \maxdeg(p):=\max\{\lVert k\rVert_\infty:p_k\ne0\}.
\end{equation*}
The $N$-dimensional subvector space
of $d$-variate polynomials of max-degree at most $n$ is denoted by
\begin{equation*}
  \Pi_n:=\{p\in\Pi\setminus\{0\}:\maxdeg(p)\le n\}\cup\{0\}
  \cong\sspan\{\C^d\ni z\mapsto z^k:k\in I_n\},
\end{equation*}
and the \emph{evaluation homomorphism} at $\Omega=\{z_1,\dots,z_M\}$
will be denoted by
\begin{equation*}
  \calA^\Omega_n\colon\Pi_n\to\C^M,
  \quad
  p\mapsto(p(z_1),\dots,p(z_M)),
\end{equation*}
or simply by $\calA_n$.
Note that the representation matrix of $\calA_n$
w.r.t.~the canonical basis of $\C^M$
and the monomial basis of $\Pi_n$
is given by the multivariate Vandermonde matrix
\begin{equation*}
  A_n=\big(z_j^k\big)_{\substack{j=1,\dots,M\\k\in I_n}}\in\C^{M\times N}.
\end{equation*}
The connection between the matrix $T_n$
and polynomials that vanish on $\Omega$
lies in the observation that,
using Definition~\ref{def:mvexpsum},
the matrix $T_n$ admits the factorization
\begin{equation}\label{eq:factorT}
  T_n=(f(k-\ell))_{k,\ell\in I_n}=P_nA_n^{\top}D_nA_n,
% T_n:=\left(f(k-\ell)\right)_{k,\ell\in I_n}%,\quad % TODO: Notation /U
% =A_n^*\diag(\hat{f})A_n\in\C^{N\times N}
% H:=\left(f(k+\ell)\right)_{k,\ell\in I_n}\quad \in\C^{N\times N}
\end{equation}
with
$D_n=\diag(d)$,
$d_j=z_j^{-n}\hat{f}_j$,
$j=1,\dots,M$,
and a permutation matrix
$P_n\in\{0,1\}^{N\times N}$.
% \todo{$P_n=(e_N,\dots,e_1)$, right?~/U}
Therefore the kernel of $A_n$,
corresponding to the polynomials in $\Pi_n$
that vanish on $\Omega$,
is a subset of the kernel of $T_n$.

In order to deal with the multivariate polynomials
encountered in this way
we need some additional notation.
The \emph{zero locus}
of a set $P\subset\Pi$ of polynomials
is denoted by
\begin{equation*}
V(P):=\{z\in\C^d:\text{$p(z)=0$ for all $p\in P$}\},
\end{equation*}
that is,
$V(P)$ consists of the \emph{common} roots of all the polynomials in $P$.
% Furthermore,
% for the proof of Theorem~\ref{thm:kerA}
% given below and the remainder of this subsection
For a set $\Omega\subset\C^d$,
\begin{equation*}
  I(\Omega):=\{p\in\Pi:\text{$p(z)=0$ for all $z\in\Omega$}\}
  =\bigcup_{n\in\N}\ker\calA^\Omega_n
\end{equation*}
is the so-called \emph{vanishing ideal} of $\Omega$.
Finally,
for a set $P\subset\Pi$ of polynomials,
\begin{equation*}
  \langle P\rangle:=\{\sum_{j=1}^mq_jp_j:\text{$m\in\N$, $q_j\in\Pi$, $p_j\in P$}\}
\end{equation*}
is the \emph{ideal generated by $P$}.
Note that $V(P)=V(\langle P\rangle)$ always holds.
Subsequently,
we identify $\Pi_n$ and $\C^N$
and switch back and forth between
the matrix-vector and polynomial notation.
In particular,
we do not necessarily distinguish between
$\calA_n$ and its representation matrix $A_n$,
so that e.g.~``$V(\ker A_n)$'' makes sense.

\section{Main results}\label{sect:main}
In the following two subsections, we study conditions on the degree $n$, and thereby on the number $(2n+1)^d$ of samples,
such that the parameters $z_j$ can be uniquely recovered and the polynomials used to identify them can be computed
in a numerically stable way.

\subsection{Complex parameters, polynomials, and unique solution}\label{subsect:algebraic}

Our first result gives a simple but nonetheless sharp condition
on the order of the moments such that the set of parameters $\Omega$
and the zero loci $V(\ker A_n)$ and $V(\ker T_n)$ are equal.
\begin{thm}\label{thm:kerA}
Let $f\colon\Z^d\to\C$ be an $M$-sparse $d$-variate exponential sum
with parameters $z_j\in\C_\ast^d$, $j=1,\dots,M$.
If $n\ge M$ then
\begin{equation*}
\Omega_f=V(\ker T_n(f)).
\end{equation*}
Moreover, if this equality holds for all $M$-sparse $d$-variate exponential sums $f$, then $n\ge M$.
\end{thm}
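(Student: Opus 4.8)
The plan is to prove the two implications separately. For the forward direction ($n\ge M \Rightarrow \Omega_f = V(\ker T_n)$), I would first establish the inclusion $\Omega_f \subset V(\ker T_n)$, which holds for every $n$: using the factorization \eqref{eq:factorT}, namely $T_n = P_n A_n^\top D_n A_n$, any polynomial $p\in\ker T_n$ satisfies, after pairing with the appropriate coordinate vectors, that $A_n^\top D_n A_n p = 0$; but actually the cleaner statement is that $\ker A_n \subset \ker T_n$ directly from the factorization (since $A_n p = 0$ forces $T_n p = 0$), and $\ker A_n$ consists exactly of the polynomials in $\Pi_n$ vanishing on $\Omega_f$. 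Evaluating any such polynomial at a point $z_j\in\Omega_f$ gives zero by definition, so $z_j \in V(\ker T_n)$; hence $\Omega_f\subset V(\ker T_n)$ with no hypothesis on $n$.

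The reverse inclusion $V(\ker T_n)\subset\Omega_f$ is where $n\ge M$ enters. The key point is that when $n\ge M$, the Vandermonde matrix $A_n\in\C^{M\times N}$ has full row rank $M$: this follows because the $M$ parameters $z_j\in\C_\ast^d$ are pairwise distinct, so one can find $M$ exponents in $I_n$ (e.g.\ along a line, or via a suitable unisolvent set that exists precisely because $n\ge M$) on which the corresponding $M\times M$ submatrix of $A_n$ is invertible; more robustly, there is always a separating polynomial of max-degree at most $M-1\le n$ taking prescribed values, which shows $\calA_n$ is surjective. Given full rank of $A_n$, one shows $\ker T_n = \ker A_n$: the inclusion $\ker A_n\subset\ker T_n$ is already known, and conversely if $T_n p = 0$ then $P_n A_n^\top D_n A_n p = 0$, so $A_n^\top D_n A_n p = 0$; multiplying by the appropriate left inverse and using that $D_n$ is invertible (the $d_j = z_j^{-n}\hat f_j$ are nonzero) and $A_n$ has a left inverse, one deduces $A_n p = 0$. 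Hence $V(\ker T_n) = V(\ker A_n)$, and since $\ker A_n$ generates (a subset of) the vanishing ideal $I(\Omega_f)$ whose zero locus is exactly $\Omega_f$ (here one needs that $\ker\calA^\Omega_{M-1}$, or at worst $\ker\calA^\Omega_n$, already cuts out $\Omega_f$ set-theoretically, which is a standard fact about finite point sets — separating polynomials of low degree suffice to isolate each point), we conclude $V(\ker T_n)\subset\Omega_f$.

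For the converse statement (if the equality holds for \emph{all} $M$-sparse $f$, then $n\ge M$), I would argue by contraposition: assume $n\le M-1$ and exhibit an $M$-sparse $f$ for which $\Omega_f\subsetneq V(\ker T_n)$. The natural construction is to place the $M$ parameters $z_j$ so that they impose fewer than $N=(n+1)^d$ independent linear conditions in a way that forces $\ker T_n$ to contain a polynomial with extra common roots — for instance, choose all $z_j$ on a coordinate axis or a rational normal curve so that the relevant Vandermonde-type matrix $A_n$ is \emph{not} of full rank $M$ (possible exactly when $M > n$ in the univariate-slice sense, since $M$ distinct points on a line need a degree-$(M-1)$ polynomial to be separated and a degree-$n<M-1$ one cannot even vanish on all but one of them without vanishing on all). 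Then $\ker A_n$ contains a polynomial $p$ vanishing on $\Omega_f$, but one can arrange $\ker T_n$ to strictly contain $\ker A_n$, or more simply arrange that every polynomial in $\ker A_n$ (hence in $\ker T_n$, once one checks the two kernels still agree or that $\ker T_n$ is even larger) has a common zero outside $\Omega_f$; concretely with $d=1$, $M$ points and $n=M-1$ already works at the boundary, so one takes $n = M-1$... hence the counterexample must live at $n \le M-1$ and the cleanest one is: take $z_1,\dots,z_M$ collinear (all but the first coordinate equal), so that $\ker\calA_n$ for $n<M$ is spanned by polynomials all divisible by a fixed linear form, whose zero locus is a hyperplane properly containing $\Omega_f$. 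I expect the \textbf{main obstacle} to be the reverse inclusion in the forward direction, specifically pinning down the precise algebraic fact that $V(\ker\calA^\Omega_n) = \Omega$ for a finite set $\Omega$ of $M$ points once $n\ge M$ (equivalently, $n\ge M-1$ suffices for separation) — this requires either a dimension count on the ideal $I(\Omega)$ or an explicit construction of enough separating polynomials of bounded max-degree, and getting the sharp bound is exactly what makes the "moreover" half of the theorem necessary.
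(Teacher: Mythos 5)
Your overall architecture is sound and roughly parallels the paper's, but two pieces are asserted rather than proved, and one preliminary claim is actually false.

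\textbf{The "forward inclusion" claim is a logical slip.} You assert that $\Omega_f\subset V(\ker T_n)$ holds for every $n$, justifying it via $\ker A_n\subset\ker T_n$. But that inclusion goes the wrong way: it gives $V(\ker T_n)\subset V(\ker A_n)$, not the other containment. To have $\Omega_f\subset V(\ker T_n)$ you need every polynomial in $\ker T_n$ to vanish on $\Omega_f$, i.e.\ $\ker T_n\subset\ker A_n$, which is exactly what requires the rank hypothesis. The claim is in fact false in general: take $d=1$, $M=2$, $n=0$, and $\hat f_1+\hat f_2=0$; then $T_0=(0)$, so $\ker T_0=\C$ and $V(\ker T_0)=\emptyset$. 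The paper never makes this claim; it establishes $\ker T_n=\ker A_n$ under $n\ge M$ and derives both inclusions at once. Your later argument ($\rank A_n=M$ implies $\ker A_n^\top$ trivial, hence $\ker T_n=\ker A_n$) is essentially correct and supersedes the faulty paragraph, so this is fixable by simply deleting the first claim. (Minor wording point: what you need is a left inverse of $A_n^\top$, i.e.\ full row rank of $A_n$, not "$A_n$ has a left inverse".)

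\textbf{The central algebraic fact is not proved.} You reduce to showing $V(\ker\calA^\Omega_n)=\Omega$ for $n\ge M$ and then call this "a standard fact about finite point sets", gesturing at separating polynomials. You correctly identify this as the main obstacle but do not close it. The paper proves it via an ideal-theoretic argument: $I(\{z_j\})=\langle Z_1-z_{j,1},\dots,Z_d-z_{j,d}\rangle$ are pairwise comaximal maximal ideals, so $I(\Omega)=\prod_jI(\{z_j\})=\langle\,\prod_{j=1}^M(Z_{\ell_j}-z_{j,\ell_j}):\ell_j\in\{1,\dots,d\}\,\rangle$; these generators have max-degree $\le M$, hence lie in $\ker\calA_M$, giving $\langle\ker\calA_M\rangle=I(\Omega)$ and thus $V(\ker\calA_M)=\Omega$. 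Your separating-polynomial instinct does in fact yield an alternative, more elementary proof: given $w\notin\Omega$, for each $j$ pick a coordinate $\ell_j$ with $z_{j,\ell_j}\ne w_{\ell_j}$ and set $p=\prod_{j=1}^M(Z_{\ell_j}-z_{j,\ell_j})$; then $p\in\ker\calA_M$ but $p(w)\ne0$. But as written the proposal does not carry this out, so it remains a gap. (Note also that the paper derives $\rank A_M=M$ as a corollary of this fact via a short contradiction, whereas you propose to prove the rank independently via separating polynomials of degree $\le M-1$; either works, but yours also hinges on the same unproved construction.)

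\textbf{The converse is muddled and incomplete.} Your collinear example $\Omega=\{(x_j,1,\dots,1)\}$ is the right one, but the argument about whether $\ker T_n$ "strictly contains $\ker A_n$" or "the two kernels still agree" is a red herring. The paper's argument is much simpler and does not need any analysis of $\ker T_n$: for this $\Omega$, \emph{any} subset $B\subset\Pi_n$ with $V(B)=\Omega$ must contain a polynomial $p$ that is not identically zero on the line $\{(z,1,\dots,1):z\in\C\}$, and $p(\cdot,1,\dots,1)$ is then a nonzero univariate polynomial vanishing at the $M$ distinct $x_j$, forcing $\maxdeg p\ge M$, hence $n\ge M$. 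Since $\ker T_n\subset\Pi_n$, this immediately forces $n\ge M$ whenever $V(\ker T_n)=\Omega$ for this $f$, with no need to understand $\ker T_n$ itself. You should adopt this cleaner route.
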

\begin{proof}
Let $\Omega:=\Omega_f=\{z_1,\dots,z_M\}$.
We start by proving $\Omega=V(\ker\calA_n)$.
Since $V(\ker\calA_n)\supset V(\calA_{n+1})\supset\Omega$,
it is sufficient to prove the case $n=M$.
It is a simple fact that
$I(\{z_j\})=\langle Z_1-z_{j,1},\hdots,Z_d-z_{j,d}\rangle$,
and that these ideals are pairwise comaximal, and hence we have
\begin{equation*}
I(\Omega)
=\prod_{j=1}^M I(\{z_j\})
=\langle\prod_{j=1}^M(Z_{\ell_j}-z_{j,\ell_j}):\ell_j\in\{1,\dots,d\}\rangle
\subset\langle\ker\calA_M\rangle\subset I(\Omega),
\end{equation*}
which implies
$\langle\ker\calA_M\rangle=I(\Omega)$.
Thus we have
$V(\ker\calA_M)=V(\langle\ker\calA_M\rangle)=V(I(\Omega))=\Omega$
where the last equality holds because $\Omega$ is finite
(and can easily be derived from the above).

Thus it remains to show that $\ker A_M=\ker T_M$.
We proceed by proving $\rank A_M=M$.
To simplify notation,
we omit the subscript $M$ on the matrices.
Let $N:=\dim\Pi_M$
and suppose that $A\in\C^{M\times N}$ has rank $r<M$.
Let $\Omega^\prime=\{z_1,\dots,z_r\}$
and w.l.o.g.~let $A^\prime\in \C^{r\times N}$,
denoting the first $r$
rows of $A$, be of rank $r$.
Now the first part of the proof implies the contradiction
$\Omega=V(\ker A)=V(\ker A^\prime)=\Omega^\prime$.

Considering the factorization
$T=PA^\top DA$
as in Equation~\eqref{eq:factorT}
and applying Frobenius' rank inequality
(see e.g.~\cite[0.4.5~(e)]{HoJo13})
yields
\begin{equation*}
  \rank A^\top D+\rank DA-\rank D\le\rank A^\top DA=\rank T\le\rank A
\end{equation*}
which implies $\rank T=\rank A=M$.
The factorization clearly implies $\ker A\subset \ker T$ which together with the
rank-nullity theorem $\dim\ker A=N-M=\dim\ker T$ yields the final result.

The converse follows from the fact
that for
$\Omega:=\{(x_j,1,\hdots,1)\in\C_\ast^d:j=1,\hdots,M\}$
with distinct $x_j\in\C_\ast$,
any subset $B\subset\Pi_n$ such that $V(B)=\Omega$
(which holds, by assumption, for $B=\ker T_n$)
necessarily contains a polynomial of (max-)degree at least $M$.
\end{proof}

\begin{example}\label{ex:3points}
 Let $f$ be a $3$-sparse $2$-variate exponential sum with parameters $z_j\in\C_\ast^2$
 and $\Omega=\{z_1,z_2,z_3\}$.
 The generating system of $I(\Omega)$ given in the proof of Theorem~\ref{thm:kerA} is
 \begin{equation*}
   P:=\left\{p_{\ell}:\ell\in\{1,2\}^3\right\},\quad p_{\ell}(Z_1,Z_2):=\prod_{j=1}^3 (Z_{\ell_j}-z_{j,\ell_j}).
 \end{equation*}

 We start by illustrating the generic case that no two coordinates are equal,
 i.e.,~$z_{j,\ell}\ne z_{i,\ell}$ if $j\ne i$ and $\ell=1,2$.
 The zero locus of each individual polynomial $p_{\ell}$
 is illustrated in Figure~\ref{fig:3points},
 where each axis represents $\C$.
 The zero locus of each linear factor is a complex curve and illustrated by a single line.
 We note that the set $P$ is redundant,
 i.e.~the last three polynomials in the first row of Figure~\ref{fig:3points}
 are sufficient to recover the points uniquely as their common roots,
 % But in general it is not so obvious which polynomials can be omitted.
 but there is no obvious general rule which polynomials can be omitted.

 Four other point configurations are shown in Figure~\ref{fig:3points2}.
 In the first three configurations coordinates of different points agree,
 which allows to remove some polynomials from $P$.
 % \todo{As in the other case, right?~/U}
 In particular, the third point set which is collinear is generated
 already by $(Z_1-z_{1,1})(Z_1-z_{2,1})(Z_1-z_{3,1})$ and $(Z_2-z_{1,2})^3$.
 The fourth point set is generated either by the above set $P$ of polynomials
 or by $\prod_{j=1}^3(Z_1+Z_2-z_{j,1}-z_{j,2})$ and $Z_1-Z_2$,
 (which are not elements of $P$).
\end{example}

\begin{figure}[ht!]
 \begin{center}
\fbox{\begin{tikzpicture}[scale=0.85]
% dummy
\draw[-,very thick,draw=white] (-2,0)--(2,0);
% p_(1,1,1) = 0:
\draw[-,very thick,draw=blue] (-1,-2)--(-1,2);
\draw[-,very thick,draw=blue] (0,-2)--(0,2);
\draw[-,very thick,draw=blue] (1,-2)--(1,2);

% a_1
\node[circle,draw=black,fill=violet,minimum size=2mm,inner sep=0mm] at (-1,-1) {};
% a_2
\node[circle,draw=black,fill=violet,minimum size=2mm,inner sep=0mm] at (0,1) {};
% a_3
\node[circle,draw=black,fill=violet,minimum size=2mm,inner sep=0mm] at (1,0) {};
\end{tikzpicture}}
\fbox{\begin{tikzpicture}[scale=0.85]
% p_(1,1,2) = 0:
\draw[-,very thick,draw=blue] (-1,-2)--(-1,2);
\draw[-,very thick,draw=blue] (0,-2)--(0,2);
\draw[-,very thick,draw=blue] (-2,0)--(2,0);

% a_1
\node[circle,draw=black,fill=violet,minimum size=2mm,inner sep=0mm] at (-1,-1) {};
% a_2
\node[circle,draw=black,fill=violet,minimum size=2mm,inner sep=0mm] at (0,1) {};
% a_3
\node[circle,draw=black,fill=violet,minimum size=2mm,inner sep=0mm] at (1,0) {};
\end{tikzpicture}}
\fbox{\begin{tikzpicture}[scale=0.85]
% p_(1,2,1) = 0:
\draw[-,very thick,draw=blue] (-1,-2)--(-1,2);
\draw[-,very thick,draw=blue] (-2,1)--(2,1);
\draw[-,very thick,draw=blue] (1,-2)--(1,2);

% a_1
\node[circle,draw=black,fill=violet,minimum size=2mm,inner sep=0mm] at (-1,-1) {};
% a_2
\node[circle,draw=black,fill=violet,minimum size=2mm,inner sep=0mm] at (0,1) {};
% a_3
\node[circle,draw=black,fill=violet,minimum size=2mm,inner sep=0mm] at (1,0) {};
\end{tikzpicture}}
\fbox{\begin{tikzpicture}[scale=0.85]
% p_(2,1,1) = 0:
\draw[-,very thick,draw=blue] (-2,-1)--(2,-1);
\draw[-,very thick,draw=blue] (0,-2)--(0,2);
\draw[-,very thick,draw=blue] (1,-2)--(1,2);

% a_1
\node[circle,draw=black,fill=violet,minimum size=2mm,inner sep=0mm] at (-1,-1) {};
% a_2
\node[circle,draw=black,fill=violet,minimum size=2mm,inner sep=0mm] at (0,1) {};
% a_3
\node[circle,draw=black,fill=violet,minimum size=2mm,inner sep=0mm] at (1,0) {};
\end{tikzpicture}}
\fbox{\begin{tikzpicture}[scale=0.85]
% p_(1,2,2) = 0:
\draw[-,very thick,draw=blue] (-1,-2)--(-1,2);
\draw[-,very thick,draw=blue] (-2,1)--(2,1);
\draw[-,very thick,draw=blue] (-2,0)--(2,0);

% a_1
\node[circle,draw=black,fill=violet,minimum size=2mm,inner sep=0mm] at (-1,-1) {};
% a_2
\node[circle,draw=black,fill=violet,minimum size=2mm,inner sep=0mm] at (0,1) {};
% a_3
\node[circle,draw=black,fill=violet,minimum size=2mm,inner sep=0mm] at (1,0) {};
\end{tikzpicture}}
\fbox{\begin{tikzpicture}[scale=0.85]
% p_(2,1,2) = 0:
\draw[-,very thick,draw=blue] (-2,-1)--(2,-1);
\draw[-,very thick,draw=blue] (0,-2)--(0,2);
\draw[-,very thick,draw=blue] (-2,0)--(2,0);

% a_1
\node[circle,draw=black,fill=violet,minimum size=2mm,inner sep=0mm] at (-1,-1) {};
% a_2
\node[circle,draw=black,fill=violet,minimum size=2mm,inner sep=0mm] at (0,1) {};
% a_3
\node[circle,draw=black,fill=violet,minimum size=2mm,inner sep=0mm] at (1,0) {};
\end{tikzpicture}}
\fbox{\begin{tikzpicture}[scale=0.85]
% p_(2,2,1) = 0:
\draw[-,very thick,draw=blue] (-2,-1)--(2,-1);
\draw[-,very thick,draw=blue] (-2,1)--(2,1);
\draw[-,very thick,draw=blue] (1,-2)--(1,2);

% a_1
\node[circle,draw=black,fill=violet,minimum size=2mm,inner sep=0mm] at (-1,-1) {};
% a_2
\node[circle,draw=black,fill=violet,minimum size=2mm,inner sep=0mm] at (0,1) {};
% a_3
\node[circle,draw=black,fill=violet,minimum size=2mm,inner sep=0mm] at (1,0) {};
\end{tikzpicture}}
\fbox{\begin{tikzpicture}[scale=0.85]
% dummy
\draw[-,very thick,draw=white] (0,-2)--(0,2);
% p_(2,2,2) = 0:
\draw[-,very thick,draw=blue] (-2,-1)--(2,-1);
\draw[-,very thick,draw=blue] (-2,1)--(2,1);
\draw[-,very thick,draw=blue] (-2,0)--(2,0);

% a_1
\node[circle,draw=black,fill=violet,minimum size=2mm,inner sep=0mm] at (-1,-1) {};
% a_2
\node[circle,draw=black,fill=violet,minimum size=2mm,inner sep=0mm] at (0,1) {};
% a_3
\node[circle,draw=black,fill=violet,minimum size=2mm,inner sep=0mm] at (1,0) {};
\end{tikzpicture}}
\end{center}

  \caption{Zero sets of the polynomials in $P$, $d=2$, $M=3$, and for the case that no two coordinates are equal.}
  \label{fig:3points}
 \end{figure}
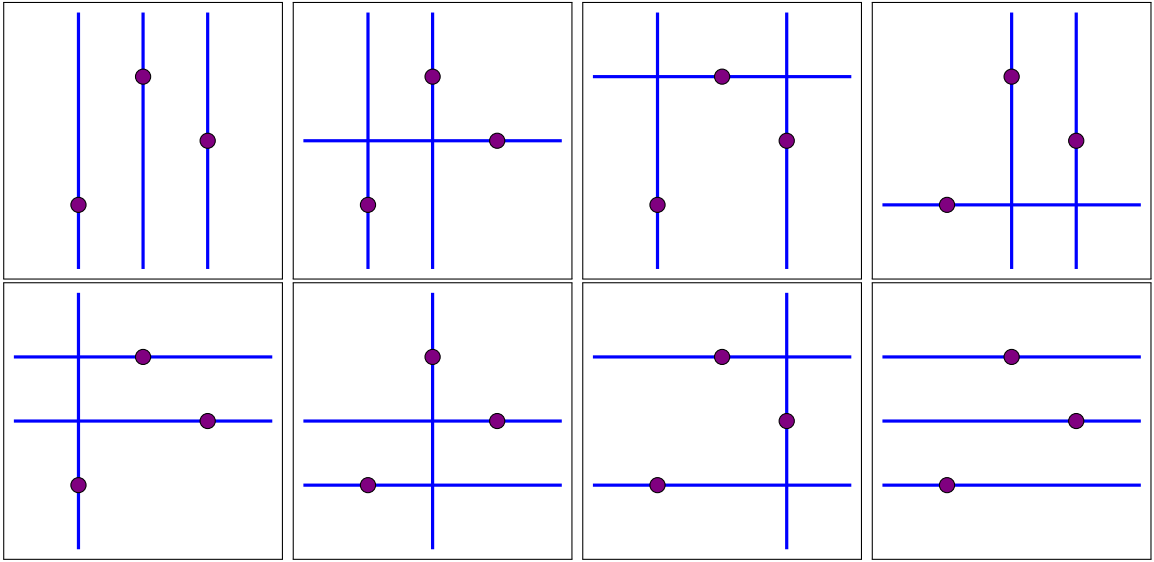

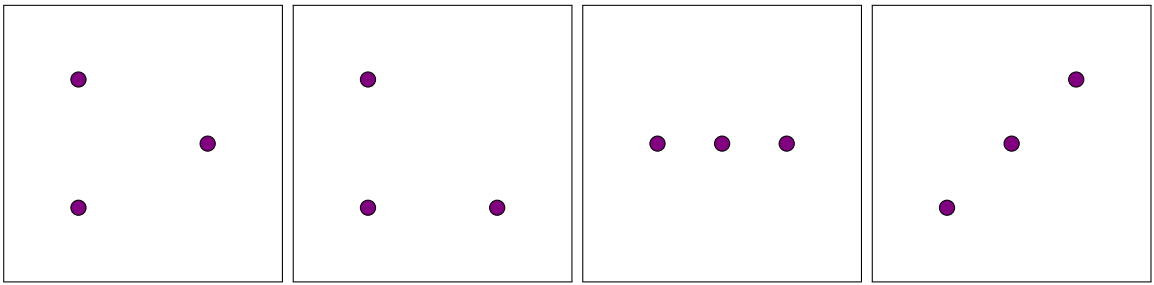
\begin{figure}[ht!]
 \begin{center}
\fbox{\begin{tikzpicture}[scale=0.85]
% dummy
\draw[-,very thick,draw=white] (-2,-2)--(2,2);
% a_1
\node[circle,draw=black,fill=violet,minimum size=2mm,inner sep=0mm] at (-1,-1) {};
% a_2
\node[circle,draw=black,fill=violet,minimum size=2mm,inner sep=0mm] at (-1,1) {};
% a_3
\node[circle,draw=black,fill=violet,minimum size=2mm,inner sep=0mm] at (1,0) {};
\end{tikzpicture}}
\fbox{\begin{tikzpicture}[scale=0.85]
% dummy
\draw[-,very thick,draw=white] (-2,-2)--(2,2);
% a_1
\node[circle,draw=black,fill=violet,minimum size=2mm,inner sep=0mm] at (-1,-1) {};
% a_2
\node[circle,draw=black,fill=violet,minimum size=2mm,inner sep=0mm] at (-1,1) {};
% a_3
\node[circle,draw=black,fill=violet,minimum size=2mm,inner sep=0mm] at (1,-1) {};
\end{tikzpicture}}
\fbox{\begin{tikzpicture}[scale=0.85]
% dummy
\draw[-,very thick,draw=white] (-2,-2)--(2,2);
% a_1
\node[circle,draw=black,fill=violet,minimum size=2mm,inner sep=0mm] at (-1,0) {};
% a_2
\node[circle,draw=black,fill=violet,minimum size=2mm,inner sep=0mm] at (0,0) {};
% a_3
\node[circle,draw=black,fill=violet,minimum size=2mm,inner sep=0mm] at (1,0) {};
\end{tikzpicture}}
\fbox{\begin{tikzpicture}[scale=0.85]
% dummy
\draw[-,very thick,draw=white] (-2,-2)--(2,2);
% a_1
\node[circle,draw=black,fill=violet,minimum size=2mm,inner sep=0mm] at (-1,-1) {};
% a_2
\node[circle,draw=black,fill=violet,minimum size=2mm,inner sep=0mm] at (0,0) {};
% a_3
\node[circle,draw=black,fill=violet,minimum size=2mm,inner sep=0mm] at (1,1) {};
\end{tikzpicture}}
\end{center}
  \caption{Point sets $\Omega\subset\C^2$, $d=2$, $M=3$.}
  \label{fig:3points2}
 \end{figure}

 \begin{remark}\label{rem:nGeM}
  Concerning the ``natural'' generator
  $P=\left\{\prod_{j=1}^M(Z_{\ell_j}-z_{j,\ell_j}):\ell_j\in\{1,\dots,d\}\right\}$
  used in the proof above,
  we note that although the ideals
  $\langle P\rangle=\langle\ker\calA_M\rangle$ coincide,
  the subvector~space inclusion
  \begin{equation*}
   \sspan P\subset\ker\calA_M
  \end{equation*}
  is strict in general
  as can be seen for $d=2$, $M=2$,
  $z_1=(0,0)$, $z_2=(1,1)$
  and the polynomial $Z_1-Z_2\in\ker\calA_2$.
  Moreover,
  we have the cardinality $\lvert P\rvert=d^M$,
  at least for different coordinates $z_{j,\ell}$,
  and thus $\lvert P\rvert\gg M^d-M=\dim\ker\calA_M$,
  i.e.,~the generator $P$ contains many linear dependencies
  and is highly redundant for large $M$.

  Finally, we would like to comment on the degree $n$
  and the total number of samples $(2n+1)^d$
  with respect to the number of parameters $M$:
  \begin{enumerate}
   \item
    A small degree $n\in\N$, $M<N<M+d$,
    and surjective $\calA_n$ results in an uncountably infinite zero locus
    $V(\ker{\calA}_n)$, since $\dim(\ker{\calA}_n)\le N-M<d$
    and thus $I(\Omega)$ is generated by less than $d$ polynomials.
    % \todo{References.~/U}
   \item
    Increasing the degree results ``generically'' in a finite zero locus, cf.~\cite{AnCaHo10},
    but ``generically'' identifies spurious parameters since e.g.~for
    $d=2$ B\'ezout's theorem yields $\lvert V(p,q)\rvert\le\deg(p)\deg(q)$
    with equality in the projective setting
    (counting the roots with multiplicity),
    for coprime polynomials $p,q\in\ker{\calA}_n$.
  \end{enumerate}
\end{remark}
\begin{remark}
We discuss a slight modification of our approach.
Instead of $I_n=\{0,\dots,n\}^d=\{k\in\NZ^d:\norm{k}_\infty\le n\}$
we take $J_n:=\{k\in\NZ^d:\norm{k}_1\le n\}$
as index set
and consider the matrix
\begin{equation*}
H_n(f):=(f(k+\ell))_{k,\ell\in J_n}\in\C^{\binom{n+d}{d}\times\binom{n+d}{d}}
\end{equation*}
instead of $T_n(f)=(f(k-\ell))_{k,\ell\in I_n}$.
Theorem~\ref{thm:kerA} also holds with $T_n$ replaced by $H_n$
with almost no change to the proof.
In this way we need only $\binom{2n+d}{d}$ rather than $(2n+1)^d$ samples of $f$
and also allow for arbitrary parameters $z_j\in\C^d$ instead of $z_j\in\C_\ast^d$.
While $T_n$ is a multilevel Toeplitz matrix, $H_n$ is a submatrix of a multilevel Hankel matrix, and
for the trigonometric setting discussed in the following subsection,
it is more natural to consider the moments $f(k)$, $k\in\Z^d$, $\norm{k}_\infty\le n$,
than $f(k)$, $k\in\NZ^d$, $\norm{k}_1\le2n$.
\end{remark}

\subsection{Parameters on the torus, trigonometric polynomials, and stable solution}\label{subsect:trigonometric}

We now restrict our attention to parameters $z_j\in\T^d$,
hence $z_j=\eip{t_j}$ for a unique $t_j\in[0,1)^d$.
In this case, $V(\ker\calA_n)$ fulfills a $2^d$-fold symmetry in the following sense.
  Let $p(z)=\sum_{k=0}^n \hat p_k z^k\in\ker \calA_n$ and $z=(z_1,\hdots,z_d)^{\top} \in \C^d$ with $p(z)=0$, then
  $z'=(\overline{z_1}^{-1},z_2,\hdots,z_d)^{\top}$ is a root of the 1st-coordinate conjugate reciprocal polynomial
  \begin{equation*}
   q(z):=\overline{\overline{z_1}^n p(\overline{z_1}^{-1},z_2,\hdots,z_d)}=\sum_{k=0}^n \overline{\hat p_{n-k_1,k_2,\hdots,k_d}} z^k.
  \end{equation*}
 Since the roots $z\in\Omega\subset\T^d$ are self reciprocal $z'=z$, we have $q\in\ker\calA_n$ and thus
 $z\in V(\ker\calA_n)$ implies $z'\in V(\ker\calA_n)$ for all choices of a conjugated reciprocal coordinate.

  Moreover, we have the following construction of a so-called dual certificate~\cite{CaFe14,CaFe13,BeDeFe15a,BeDeFe15b}.
\begin{thm}\label{thm:DualCertificate}
 Let $d,n,M\in\N$, $n\ge M$, $t_j\in [0,1)^d$, $j=1,\hdots,M$, $z_j:=\eip{t_j}$, and $\Omega:=\{z_j:j=1,\hdots,M\}$ be given.
 Moreover, let $\hat p_{\ell}\in\C^N$, $\ell=1,\hdots,N$, be an orthonormal basis with $\hat p_{\ell}\in\ker(T_n)^{\bot}$, $\ell=1,\hdots,M$,
 and $p_{\ell}:\C_\ast^d\rightarrow\C$, $p_{\ell}(z)=\sum_{k\in I_n} \hat p_{\ell,k} z^k$, then $p:[0,1)^d\rightarrow\C$,
 \begin{equation}
	 \label{SummeVonQuadraten}
  p(t)=\frac{1}{N} \sum_{\ell=1}^M |p_{\ell}(\eip{t})|^2,
 \end{equation}
 is a trigonometric polynomial of degree $n$ and fulfills $0\le p(t) \le 1$ for all $t\in [0,1)^d$ and $p(t)=1$ if and only if $t=t_j$ for some $j=1,\hdots,M$.
\end{thm}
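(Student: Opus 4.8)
The plan is to recognize $N\,p(t)$ as the squared length of an orthogonal projection of a monomial vector, and then to read off all three assertions from the Pythagorean identity for orthogonal projections together with Theorem~\ref{thm:kerA}. Concretely, set $v(t):=(\eim{kt})_{k\in I_n}\in\C^N$ and equip $\C^N$ with the standard Hermitian inner product $\langle a,b\rangle:=\sum_k a_k\overline{b_k}$. Since $\overline{\eim{kt}}=\eip{kt}$, one has $p_\ell(\eip t)=\langle\hat p_\ell,v(t)\rangle$ for every $\ell$. Arguing as in the proof of Theorem~\ref{thm:kerA} one has $\rank T_n=M$ and $\ker T_n=\ker A_n$, so $\ker(T_n)^\bot$ has dimension $M$ and the given vectors $\hat p_1,\dots,\hat p_M$ form an orthonormal basis of it; writing $\Pi$ for the orthogonal projection of $\C^N$ onto $\ker(T_n)^\bot$, Parseval's identity for this basis gives
\begin{equation*}
  N\,p(t)=\sum_{\ell=1}^M\abs{\langle v(t),\hat p_\ell\rangle}^2=\norm{\Pi\,v(t)}^2 .
\end{equation*}

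From this identity the first two assertions follow at once. To see that $p$ is a trigonometric polynomial of degree $n$, I would expand $\abs{p_\ell(\eip t)}^2=\sum_{k,k'\in I_n}\hat p_{\ell,k}\overline{\hat p_{\ell,k'}}\,\eip{(k-k')t}$ and note that all occurring frequencies $k-k'$ lie in $I_n-I_n=\{-n,\dots,n\}^d$; a finite nonnegative linear combination of such functions, divided by $N$, is again a trigonometric polynomial of degree $n$. Nonnegativity of $p$ is clear, and $p(t)\le1$ holds because an orthogonal projection is norm-nonincreasing, so $\norm{\Pi v(t)}^2\le\norm{v(t)}^2=\sum_{k\in I_n}\abs{\eim{kt}}^2=N$.

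For the equality statement I would use that, for an orthogonal projection, $\norm{\Pi v(t)}^2=\norm{v(t)}^2$ holds if and only if $v(t)$ lies in the range of $\Pi$, i.e.\ $v(t)\in\ker(T_n)^\bot$, equivalently $v(t)\perp\ker T_n=\ker A_n$. Given the coefficient vector $\hat q$ of a polynomial $q\in\ker\calA_n$, the same conjugation computation as above yields $\langle\hat q,v(t)\rangle=\sum_{k\in I_n}\hat q_k\eip{kt}=q(\eip t)$, so $v(t)\perp\ker A_n$ if and only if $q(\eip t)=0$ for all $q\in\ker\calA_n$, that is, $\eip t\in V(\ker A_n)=V(\ker T_n)=\Omega$ by Theorem~\ref{thm:kerA}. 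Since each $t_j\in[0,1)^d$ is the unique representative of the parameter $z_j=\eip{t_j}\in\T^d$, the condition $\eip t\in\Omega$ for $t\in[0,1)^d$ is equivalent to $t\in\{t_1,\dots,t_M\}$, which is the claim.

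I do not expect a serious obstacle: once the right-hand side of \eqref{SummeVonQuadraten} is identified with $\frac1N\norm{\Pi v(t)}^2$, everything reduces to elementary properties of orthogonal projections. The only points that need care are bookkeeping ones — keeping the conjugations straight so that $\langle\hat q,v(t)\rangle$ comes out as $q(\eip t)$ (which uses realness of $t$; it is the restriction to $\T^d$ that makes $\Omega$ attainable by the vectors $v(t)$ with $t\in[0,1)^d$), and invoking not merely the statement but the argument of Theorem~\ref{thm:kerA} for $\rank T_n=M$ and $\ker T_n=\ker A_n$, which is what guarantees that $\hat p_1,\dots,\hat p_M$ span all of $\ker(T_n)^\bot$.
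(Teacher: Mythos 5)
Your proposal is correct and takes essentially the same route as the paper: both arguments rest on the observation that, on $\T^d$, the full orthonormal system satisfies $\sum_{\ell=1}^N|p_\ell(z)|^2=N$ (which you phrase as $\norm{v(t)}^2=N$ plus Parseval/Bessel for the projection onto $\ker(T_n)^\perp$, and the paper phrases via the completeness relation $\sum_\ell\overline{\hat p_{\ell,r}}\hat p_{\ell,s}=\delta_{rs}$), so that $p(t)\le 1$ with equality exactly when the complementary kernel polynomials all vanish at $\eip t$, and then invokes Theorem~\ref{thm:kerA} to identify that zero set with $\Omega$. Your projection-operator bookkeeping is a slightly more formal rendering of the same Pythagorean identity the paper uses in one line, but there is no substantive difference in the argument.
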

\begin{proof}
 First note that every orthonormal basis $\hat p_{\ell}\in\C^N$, $\ell=1,\hdots,N$, leads to
 \begin{equation*}
  \sum_{\ell=1}^N |p_{\ell}(z)|^2
  %&= \sum_{\ell=1}^N \sum_{r,s=1}^N \overline{\hat p_{\ell,r} z^r} \hat p_{\ell,s} z^s\\
  = \sum_{r,s=1}^N \overline{z^r} z^s \sum_{\ell=1}^N  \overline{\hat p_{\ell,r}} \hat p_{\ell,s}\\
  = \sum_{r=1}^N |z^r|^2=N
 \end{equation*}
 for $z\in\T^d$. Moreover, $\bar z=z^{-1}$ on $\T^d$ yields that $p$ is indeed a trigonometric polynomial.
 Finally, Theorem~\ref{thm:kerA} assures $\sum_{\ell=M+1}^N |p_{\ell}(z)|^2=0$ if and only if $z\in\Omega$.
\end{proof}
We proceed with an estimate on the condition number of the preconditioned matrix $T=T_n$.
\begin{definition}
Let $M\in\N$ and $\Omega=\{\eip{t_j}:t_j\in [0,1)^d,\;j=1,\hdots,M\}$, then
\begin{equation*}
\sep(\Omega):=\min_{r\in\Z^d,\;j\ne\ell}\| t_j-t_{\ell}+r\|_{\infty}
\end{equation*}
is the \emph{separation distance} of $\Omega$.
For $q>0$, we say that $\Omega$ is \emph{$q$-separated} if $\sep(\Omega)>q$.
\end{definition}

\begin{thm}\label{thm:condT}
 Let $d,n,M\in\N$, $t_j\in [0,1)^d$, $j=1,\hdots,M$, $z_j:=\eip{t_j}$,
 $q>0$,
 and
 $\Omega:=\{z_j:j=1,\hdots,M\}$ be $q$-separated. Moreover, let $\hat f_j>0$, then $n\ge2d q^{-1}$ implies the condition number estimate
 \begin{equation*}
  \cond_2 W T W \le \frac{(nq)^{d+1}+(2d)^{d+1}}{(nq)^{d+1}-(2d)^{d+1}} \cdot \frac{\max_j \hat f_j}{\min_j \hat f_j}\text{,}
 \end{equation*}
 where the diagonal preconditioner $W=\diag w$, $w_k>0$, $k\in I_n$, is well chosen.
 In particular, $\lim_{n\rightarrow\infty} \cond_2 W T W=\max_j \hat f_j/\min_j \hat f_j$.
\end{thm}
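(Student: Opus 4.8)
The plan is to strip off the coefficient‑dependent factor $\max_j\hat f_j/\min_j\hat f_j$ by a congruence (Ostrowski‑type) argument, to reduce the condition number of the rank‑$M$ matrix $WTW$ (so that $\cond_2$ is to be read as the ratio of its largest to its smallest \emph{nonzero} eigenvalue) to that of an explicit Hermitian positive semidefinite $M\times M$ Gram matrix, and then to push the latter towards $1$ by choosing $W$ as a tensor product of well‑localized one‑dimensional windows and applying Gershgorin's circle theorem. First, since $z_j\in\T^d$ we have $z_j^{-\ell}=\overline{z_j^{\ell}}$, hence $f(k-\ell)=\sum_{j=1}^M\hat f_jz_j^k\overline{z_j^{\ell}}$, so that, up to a complex conjugation that does not affect singular values, $T_n(f)=A_n^*DA_n$ with $D:=\diag(\hat f_1,\dots,\hat f_M)$ --- a Hermitian variant of the factorization~\eqref{eq:factorT}, and $D$ is positive definite because $\hat f_j>0$. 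Consequently $WTW=(D^{1/2}A_nW)^*(D^{1/2}A_nW)$ up to conjugation, so its nonzero eigenvalues coincide with those of the $M\times M$ matrix $D^{1/2}BD^{1/2}$, where $B:=A_nW^2A_n^*$. Since all eigenvalues of $D^{1/2}BD^{1/2}$ lie between $\lambda_{\min}(D)\lambda_{\min}(B)$ and $\lambda_{\max}(D)\lambda_{\max}(B)$, we get $\cond_2(WTW)\le\cond_2(D)\,\cond_2(B)=\tfrac{\max_j\hat f_j}{\min_j\hat f_j}\,\cond_2(B)$, and it remains to bound $\cond_2(B)$; incidentally the bound below will also show that $B$, hence $A_n$, has rank $M$ as soon as $nq>2d$, so that all these quantities are well defined.

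Next I would take $W=\diag(w)$ with $w_k=\prod_{i=1}^d g(k_i)$ for $k\in I_n$, where $g\colon\{0,\dots,n\}\to(0,\infty)$ is a symmetric one‑dimensional window chosen so that the associated kernel $\kappa(s):=\sum_{k=0}^n g(k)^2\eip{ks}$ satisfies $0\le\abs{\kappa(s)}\le\kappa(0)$ together with a decay estimate of the form $\abs{\kappa(s)}/\kappa(0)\le\bigl(c_d/(n\,\dist(s,\Z))\bigr)^{d+1}$ for $s\notin\Z$ --- for instance a suitably scaled $B$‑spline of order $d+1$, equivalently an appropriate power of a Fej\'er‑type kernel, which has strictly positive coefficients. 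The exponent $d+1$ is the smallest one for which the series appearing below converges; the constant $c_d$ has to be chosen (and tracked) carefully. With this $W$ one has $B_{j\ell}=\sum_{k\in I_n}w_k^2\eip{k(t_j-t_\ell)}=\prod_{i=1}^d\kappa(t_{j,i}-t_{\ell,i})$, so every diagonal entry of $B$ equals $S:=\kappa(0)^d$, while for $j\ne\ell$, bounding all factors except the largest by $\kappa(0)$ and writing $\rho_{j\ell}:=\min_{r\in\Z^d}\norm{t_j-t_\ell+r}_\infty\ge\sep(\Omega)>q$, we obtain $\abs{B_{j\ell}}\le S\,(c_d/(n\rho_{j\ell}))^{d+1}$.

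By Gershgorin's circle theorem every eigenvalue of $B$ then lies in the interval $[S(1-r),\,S(1+r)]$ with $r:=\max_j\sum_{\ell\ne j}\abs{B_{j\ell}}/S$. To estimate $r$, fix $j$ and sort the indices $\ell\ne j$ into shells $mq<\rho_{j\ell}\le(m+1)q$, $m\ge1$: since the $t_\ell$ are $q$‑separated in $\norm{\cdot}_\infty$, a packing argument shows that the $\ell_\infty$‑ball of radius $(m+1)q$ around $t_j$ contains at most $(2m+3)^d$ of them, so the $m$‑th shell contains at most $(2m+3)^d-(2m-1)^d=\OO{d\,m^{d-1}}$ indices, while $\abs{B_{j\ell}}/S\le(c_d/(nmq))^{d+1}$ there; hence $r\le\sum_{m\ge1}\bigl((2m+3)^d-(2m-1)^d\bigr)(c_d/(nmq))^{d+1}$. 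As the shell counts grow like $m^{d-1}$ and the weights decay like $m^{-(d+1)}$, what remains is essentially a $\sum_m m^{-2}$ tail, so the series converges and equals $\OO{(nq)^{-(d+1)}}$; optimizing the window (i.e.\ the constant $c_d$) then yields the bound $r\le(2d)^{d+1}/(nq)^{d+1}$. Therefore $\lambda_{\max}(B)\le S\bigl(1+(2d/(nq))^{d+1}\bigr)$ and $\lambda_{\min}(B)\ge S\bigl(1-(2d/(nq))^{d+1}\bigr)>0$ once $nq>2d$, so $\cond_2(B)\le\frac{(nq)^{d+1}+(2d)^{d+1}}{(nq)^{d+1}-(2d)^{d+1}}$; combining this with the first paragraph gives the stated estimate, and since $r\to0$ as $n\to\infty$ we obtain $\cond_2(B)\to1$ and hence $\cond_2(WTW)\to\max_j\hat f_j/\min_j\hat f_j$.

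The linear algebra --- the congruence reduction to $B$ and the Gershgorin step --- is routine; the real work, and the place where the precise constant $2d$ and the exponent $d+1$ are pinned down, is the quantitative harmonic analysis: producing a one‑dimensional window with \emph{strictly positive} samples whose kernel obeys a sharp decay bound of order $(n\,\dist(s,\Z))^{-(d+1)}$ with a good constant, and then carrying out the shell‑by‑shell lattice‑point count and the series summation so that the resulting bound on $r$ lands exactly on $(2d)^{d+1}/(nq)^{d+1}$. I expect these two estimates --- the kernel decay with the correct constant, and the $q$‑separated lattice‑point bookkeeping --- to be the main obstacle; everything else is bookkeeping.
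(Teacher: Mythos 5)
Your reduction — passing from $\cond_2 WTW$ to $\cond_2 K$ with $K=A_nW^2A_n^*$ via the diagonal congruence $D=\diag(\hat f_j^{1/2})$, and splitting off $\max\hat f_j/\min\hat f_j=\cond_2(D)^2$ — is exactly the reduction the paper makes. The paper then simply cites Corollary~4.7 of~\cite{KuPo07} for the bound $\cond_2 K\le\frac{(nq)^{d+1}+(2d)^{d+1}}{(nq)^{d+1}-(2d)^{d+1}}$, whereas you sketch how that corollary is proved (tensor-product B-spline weights, kernel localization, Gershgorin with a shell-by-shell $q$-separation count), correctly flagging that you have not nailed down the exact window yielding the constant $2d$; so this is the same route, with the cited black box partially opened rather than a different argument.
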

\begin{proof}
 First note, that the matrix $T$ is hermitian positive semidefinite and define the condition number by $\cond_2 T:=\|T\|_2\|T^{\dagger}\|_2$, where
 $T^{\dagger}$ denotes the Moore-Penrose pseudoinverse.
 Let $D=\diag d$, $d_j=\hat f_j^{1/2}$, $j=1,\hdots,M$, and $K=A W^2 A^*\in\C^{M\times M}$, then we have
 \begin{equation*}
  \cond_2 W T W=\cond_2 W A^* D^2 A W =\cond_2 D A W^2 A^* D
  =\frac{\max \hat f_j}{\min \hat f_j}\cdot \cond_2 K
 \end{equation*}
 and Corollary~4.7 in~\cite{KuPo07} yields the condition number estimate. The second claim follows since
 $\lim_{n\rightarrow\infty} \cond_2 K=1$.
\end{proof}

In summary, the condition
\begin{equation}\label{eq:n}
 n\ge \max\{2d q^{-1},M\}
\end{equation}
allows for unique reconstruction of the parameters $\Omega$ and stability is guaranteed when computing the kernel polynomials from the given moments.

\begin{remark}\label{rem:nGeQ}
 Up to the constant $2d$, the condition $n>2d/q$ in the assumption of Theorem~\ref{thm:condT} is optimal in the sense that
 equidistant nodes $t_j=j/m$, $j\in I_m$, $n<q^{-1}=m$, imply $A\in\C^{m^d\times n^d}$ and $\rank A=n^d<m^d=M$.
 We expect that the constant $2d$ can be improved and indeed, a discrete variant of Ingham's inequality~\cite{KoLo04},~\cite[Lemma~2.1]{PoTa132}
 replaces $2d$ by $C\sqrt{d}$ but gives no explicit estimate on the condition number.

 Moreover, Theorem~\ref{thm:kerA} asserts that the condition on the degree $n$ with respect to the number ob parameters $M$ is close to optimal
 in the specific setting.
 We briefly comment on the following typical scenarios for the point set $\Omega$ and the relation~\eqref{eq:n}:
 \begin{enumerate}
  \item quasi-uniform parameters $t_j\in[0,1)^d$ might be defined via $\sep(\Omega) \approx C_d M^{-1/d}$, i.e., $\max\{2d q^{-1},M\}=M$,
  \item equidistant and co-linear parameters, e.g. $t_j=M^{-1}(j,\hdots,j)^{\top}$, imply $\sep(\Omega) \approx C M^{-1}$, i.e., both terms are of similar size,
  \item and finally parameters $t_j\in[0,1)^d$ chosen at random from the uniform distribution, imply $\mathbb{E}\sep(\Omega)=C'_d M^{-2}$,
  see e.g.~\cite{ReScTh15}, and thus $\max\{2d q^{-1},M\}=C''_d M^2$.
 \end{enumerate}
\end{remark}

Dropping the condition $n>M$ in~\eqref{eq:n} and restricting to the torus, we still get the following result on how much the
roots of the polynomials in the kernel of $T$ can deviate from the original set $\Omega$.

\begin{thm}\label{thm:stabVkerT}
 Let $d,n,M\in\N$, $t_j\in [0,1)^d$, $j=1,\hdots,M$, $z_j:=\eip{t_j}$,
 $q>0$, and $\Omega:=\{z_j:j=1,\hdots,M\}$ be $q$-separated, then $n\ge2d q^{-1}$ implies
 \begin{equation*}
  \Omega  \subset  V(\ker T)\cap\T^d \subset \{z\eip{t}:z\in\Omega, \;\|t\|_{\infty}<2d/n\}.
 \end{equation*}
\end{thm}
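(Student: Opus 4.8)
The plan is to prove the two inclusions separately. The first inclusion $\Omega \subset V(\ker T)$ is immediate: by the factorization~\eqref{eq:factorT}, $\ker A_n \subset \ker T_n$, and every polynomial in $\ker A_n$ vanishes on $\Omega$ by definition of the evaluation homomorphism; hence $\Omega \subset V(\ker A_n) \subset V(\ker T_n)$, and intersecting with $\T^d$ changes nothing since $\Omega \subset \T^d$. The substance of the theorem is the second inclusion, which says that every torus point $w = \eip{s} \in V(\ker T) \cap \T^d$ is close (within $\ell^\infty$-distance $2d/n$ in the exponent) to some $z_j \in \Omega$.

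For the second inclusion I would argue by contradiction. Suppose $w = \eip{s} \in V(\ker T) \cap \T^d$ but $\|s - t_j + r\|_\infty \ge 2d/n$ for all $j$ and all $r \in \Z^d$; in other words, the enlarged point set $\Omega' := \Omega \cup \{w\}$ is still $(2d/n)$-separated, hence $q'$-separated for $q' := 2d/n$. Consider the $(M+1)$-sparse exponential sum $f'$ with the same coefficients $\hat f_j$ on $\Omega$ and an arbitrary nonzero coefficient at $w$; more cleanly, just consider the Vandermonde matrix $A_n'$ built from the $M+1$ nodes of $\Omega'$. By Theorem~\ref{thm:condT} (or rather the ingredient behind it, namely Corollary~4.7 in~\cite{KuPo07} giving a lower bound on the smallest singular value of $W A'^* $), the condition $n \ge 2d/q' $ forces $A_n'$ to have full row rank $M+1$. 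Equivalently, $A_n'$ is injective as a map $\Pi_n \to \C^{M+1}$ in the transposed sense — the precise statement I want is that no nonzero polynomial $p \in \Pi_n$ vanishes simultaneously on all $M+1$ points of $\Omega'$, because such a $p$ would lie in the kernel of $\calA_n^{\Omega'}$ whose representation matrix $A_n'$ then could not have rank $M+1$. But now take any $p \in \ker T_n$ that does not vanish at $w$ — such a $p$ exists precisely because $w \notin \Omega = V(\ker T_n)$ would contradict... wait, we are assuming $w \in V(\ker T_n)$, so in fact every $p \in \ker T_n$ vanishes at $w$. So instead I argue: $\ker T_n$ has dimension $N - M$ by Theorem~\ref{thm:kerA} (its proof shows $\rank T_n = \rank A_n = M$ when $n \ge M$; for $n \ge 2d/q$ one still has $\rank A_n = M$ because $A_n$ has full row rank by the same singular-value lower bound applied to $\Omega$ itself), and $\ker A_n \subseteq \ker T_n$ with $\dim \ker A_n = N - M$ as well, so $\ker T_n = \ker A_n = \ker \calA_n^\Omega$. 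Thus every $p \in \ker T_n$ vanishes on $\Omega$; combined with $w \in V(\ker T_n)$, every such $p$ vanishes on $\Omega' = \Omega \cup \{w\}$, i.e.\ $\ker \calA_n^\Omega \subseteq \ker \calA_n^{\Omega'}$. But $\dim \ker \calA_n^{\Omega'} = N - \rank A_n' = N - (M+1) < N - M = \dim \ker \calA_n^\Omega$, a contradiction. Hence $\Omega'$ cannot be $(2d/n)$-separated, which is exactly the claimed inclusion once one also notes $w \notin \Omega$ is impossible (if $w \in \Omega$ the inclusion holds trivially with $t = 0$).

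The main obstacle is making rigorous the claim that the separation hypothesis yields $\rank A_n = M$ and $\rank A_n' = M+1$; this is where~\cite{KuPo07} enters, and one must check that $n \ge 2d/q$ (resp.\ $n \ge 2d/q'$ with $q' = 2d/n$, which gives $q' \le q$ and the hypothesis transfers) is precisely the regime in which the Kunis–Potts singular-value bound is non-vacuous, so that the Vandermonde matrix of any $q'$-separated node set of the relevant size has full row rank. A subtle point: applying the bound to $\Omega'$ requires $n \ge 2d/\sep(\Omega')$, and by construction $\sep(\Omega') \ge 2d/n$, so $n \ge 2d/\sep(\Omega')$ holds with equality allowed — one should ensure the bound from~\cite{KuPo07} is still strictly positive at the boundary, or else work with a strict inequality $\|s - t_j + r\|_\infty > 2d/n$ in the negation and note the closed inclusion then follows by taking the closure (the set on the right-hand side is closed). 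Beyond that, the argument is a clean dimension count using Theorem~\ref{thm:kerA} to pin down $\ker T_n = \ker A_n$; no further computation is needed.
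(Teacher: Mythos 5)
Your argument is essentially the paper's proof: both form the augmented Vandermonde matrix of $\Omega\cup\{w\}$ (the paper's $A_y$, your $A_n'$), invoke Corollary~4.7 of \cite{KuPo07} to conclude this matrix has full rank $M+1$, and derive a contradiction from $\dim\ker T = N-M$. You helpfully make explicit the intermediate step $\ker T_n = \ker A_n$, which the paper uses but does not spell out, and the strict-versus-nonstrict boundary subtlety you flag at the end is present in the paper's proof as well.
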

\begin{proof}
 We prove the assertion by contradiction. Let $y\in V(\ker T)\cap\T^d$ be $2d/n$-separated from the point set $\Omega\subset\T^d$ and
 let $\hat p_{\ell}\in\C^{N}$, $\ell=1,\hdots,N-M$, constitute a basis of $\ker T$.
 By definition, we have $p_{\ell}(y)=0$ and thus the augmented Fourier Matrix
 \begin{equation*}
  A_y:=\pmat{A\\ e_y},\quad e_y:=(\eip{ky})_{k\in I_n}^{\top},
 \end{equation*}
 fulfills $A_y \hat p_{\ell}=0$, i.e., $\dim\ker A_y\ge N-M$. On the other hand, Corollary~4.7 in~\cite{KuPo07} implies $\rank A_y=M+1$
 and thus the contradiction $N=\dim \ker A_y + \rank A_y \ge N+1$.
\end{proof}

\subsection{Prototypical algorithm}\label{subsect:algorithm}

Let $f$ be an $M$-sparse $d$-variate exponential sum
with pairwise distinct parameters $z_j\in\C_\ast^d$
and $n\ge M$ be an upper bound.
Theorem~\ref{thm:kerA} justifies the following
prototypical formulation of the multivariate Prony method.
\begin{algorithm}[H]
  \begin{tabular}{ll}
  Input: & $d,n\in\N$,\\
         & $f(k)$, $k\in\{-n,\hdots,n\}^d$
  \end{tabular}
  \begin{algorithmic}
    \itemsep=1.1ex
    \STATE Set up $T_n=\left(f(k-\ell)\right)_{k,\ell\in I_n}\in\C^{N\times N}$
    \STATE Compute $\ker T_n$
    \STATE Compute $V(\ker T_n)$
  \end{algorithmic}

  Output: $V(\ker T_n)=\{z_1,\dots,z_M\}$\rule[-1.5ex]{0ex}{3ex}
  \caption{Multivariate Prony method.}
  \label{alg:prony2d}
\end{algorithm}
The third step,
i.e.,~the computation of the zero locus $V(\ker T_n)$,
is beyond the scope of this paper
and several methods can be found elsewhere,
see e.g.~\cite{Bertini,MoSa00,SoBaLa14,Stetter_Numerical-Polynomial-Algebra}.
We further note that the number $(2n+1)^d$ of used samples scales as $\OO{M^d}$
and that standard algorithms for computing the kernel of the matrix $T_n$ have cubic complexity.

\section{Other approaches}
\label{sect:ErstesKapitel}

There are many variants of the one dimensional moment problem from Section~\ref{sect:Einleitung}, originating from such diverse fields as for example signal processing, electro engineering, and quantum chemistry, with as widespread applications as spectroscopy, radar imaging, or super-resolved optical microscopy, see e.g.~the survey paper~\cite{PlTa14}.
Variants of Prony's method with an increased stability or a direct computation of the parameters without the detour via polynomial coefficients include
for example MUSIC~\cite{Sc86}, ESPRIT~\cite{RoKa90}, the Matrix-Pencil method~\cite{HuSa90}, the Approximate Prony method~\cite{PoTa10}, the Annihilating Filter method~\cite{VeMaBl02},
and methods relying on orthogonal polynomials~\cite{FiMhPr12}.

Multivariate generalizations of these methods have been considered in~\cite{JiSiBe01,AnCaHo10} by realizing the parameters as common roots of multivariate polynomials.
In contrast to our approach, both of these papers have an emphasis on the generic situation where e.g.~the zero locus of two bivariate polynomials is finite.
In this case, the total number of used moments for reconstruction might indeed scale as the number of parameters but no guarantee is given for a specific instance of the moment problem.
A second line of multivariate generalizations~\cite{PoTa132,PlWi13} decomposes the multivariate moment problem into a series of univariate moment problems via projections of the measure.
While again this approach typically works well, the necessary number of a-priori chosen projections for a signed measure scales as the number of parameters in the bivariate case~\cite{DiIs15}.
We note that the subset
\begin{equation*}
  P_0:=\{\prod_{j=1}^M(Z_{\ell}-z_{j,\ell}):\ell=1,\hdots,d\}\subset I(\Omega),
\end{equation*}
of the set of generators in the proof of Theorem~\ref{thm:kerA} are exactly the univariate polynomials when projecting onto the $d$ coordinate axes, see also the
first and last zero locus in Figure~\ref{fig:3points}.

A different approach to the moment problem from Section~\ref{sect:Einleitung} has been considered in~\cite{CaFe14,CaFe13,BeDeFe15a,BeDeFe15b} and termed `super-resolution'.
From a signal processing perspective, knowing the first moments is equivalent to sampling a low-pass version of the measure and restoring the high frequency information from these samples.
With the notation of Remark~\ref{rem:trigMoments} the measure $\tau$ with parameters $t_j\in[0,1)^d$ is the unique minimizer of
\begin{equation*}
 \min \|\nu\|_{\mathrm{TV}} \quad \text{s.t.} \quad \int_{[0,1)^d}\eip{kt}\dd\nu(t)=f(k), \; k\in I_n,
\end{equation*}
provided the parameters fulfill a separation condition as in Section~\ref{subsect:trigonometric}.
This is proven via the existence of a so-called dual certificate~\cite[Appendix~A]{CaFe14} and becomes computationally attractive by recasting this dual problem as a semidefinite program.
The program has just $(n+1)^2/2$ variables in the univariate case~\cite[Corollary~4.1]{CaFe14}, but at least we do not know an explicit bound on the number of variables in the multivariate case, see~\cite[Remark~4.17, Theorem~4.24, and Remark~4.26]{Du07}.

Finally note that there is a large body of literature on the related topic of reconstructing a multivariate sparse trigonometric polynomials from samples, see e.g.~\cite{BeTi88,Ma95,GiLaLe02,CaRoTa06,Ra06,RuVe08,GiInIwSchm14}.
Translated to the situation at hand, all these methods heavily rely on the fact that the parameters $t_j\in[0,1)^d$ are located on a Cartesian grid with
mesh sizes $1/m_1,\hdots,1/m_d$ for some $m_1,\hdots,m_d\in\N$ and deteriorate if this condition fails~\cite{ChScPeCa11}.
Hence, these methods lack one major advantage of Prony's method, namely that the parameters $t_j\in[0,1)^d$ can, in principle, be reconstructed with infinite precision.

\goodbreak
%%%%%%%%%%%%%%%%%%%%%%%%%%%%%%%%%%%%%%%%%%%%%%%%%%%%%%%%%%%%%%%%%%%%%%%%%%%%%%
\section{Numerical results}\label{sect:num}
 All numerical experiments are realized in MATLAB~2014a on an Intel~i7, 12GByte, 2.1GHz, Ubuntu 14.04.

\begin{example}[$d=1$]
 \begin{figure}[htbp]
\centering
  \subfigure[Sum of squared absolute values of kernel polynomials on $\T$ (identified with $[0,1)$).]
  {\includegraphics[width=0.3\textwidth]{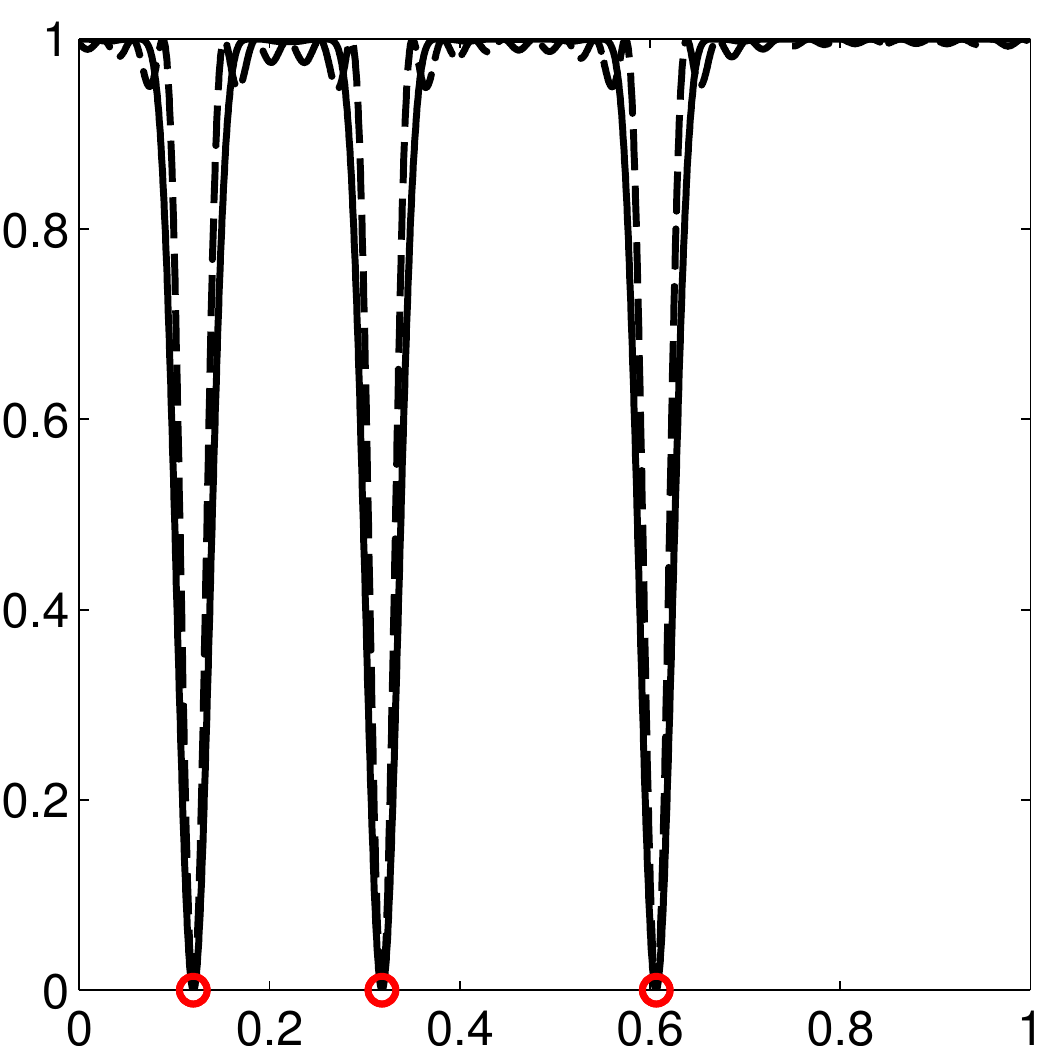}}
  \subfigure[Zero set of sum of squared absolute values of kernel polynomials on $\C$.]
  {\includegraphics[width=0.3\textwidth]{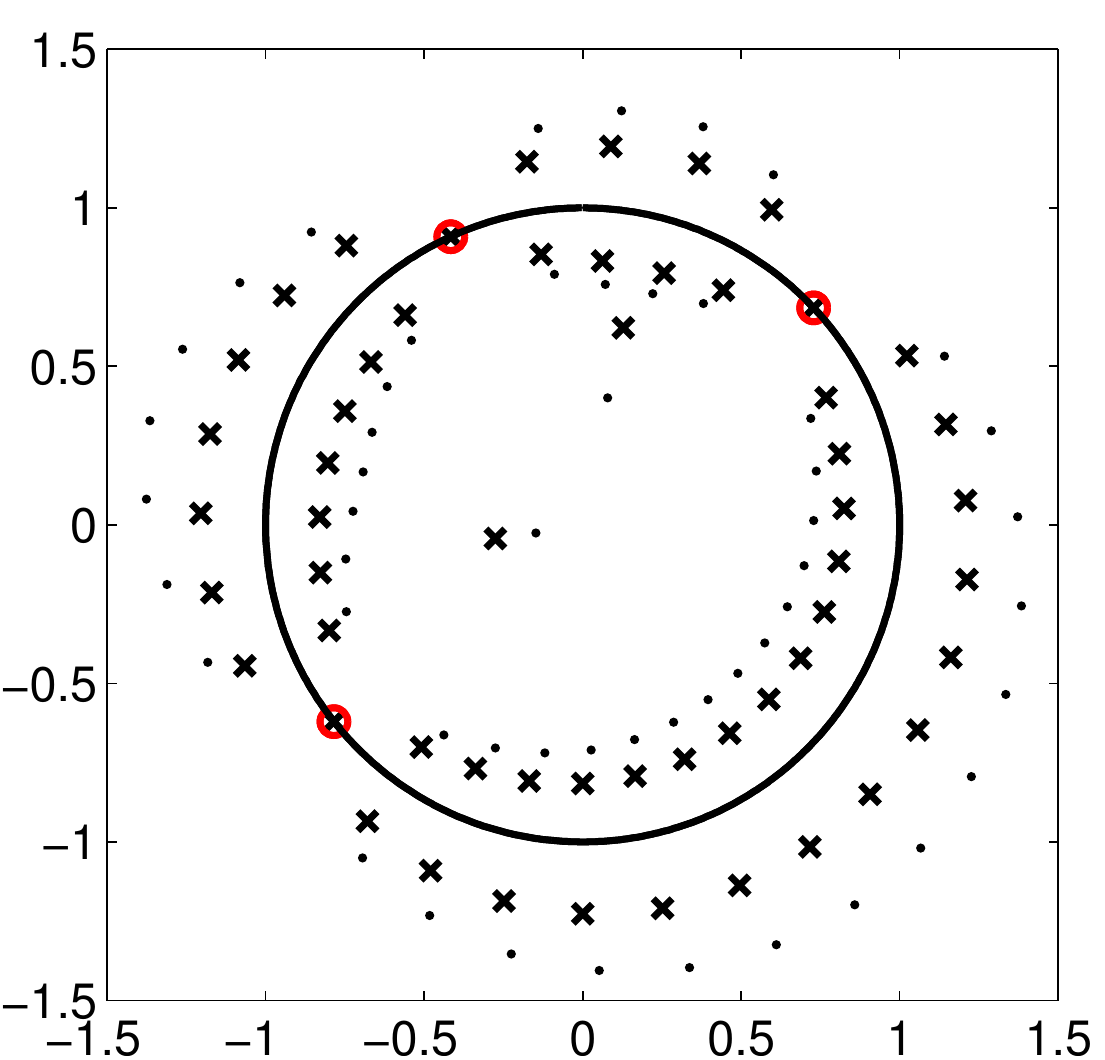}}
  \subfigure[Squared absolute values of the first polynomial orthogonal to the kernel.]
  {\includegraphics[width=0.3\textwidth]{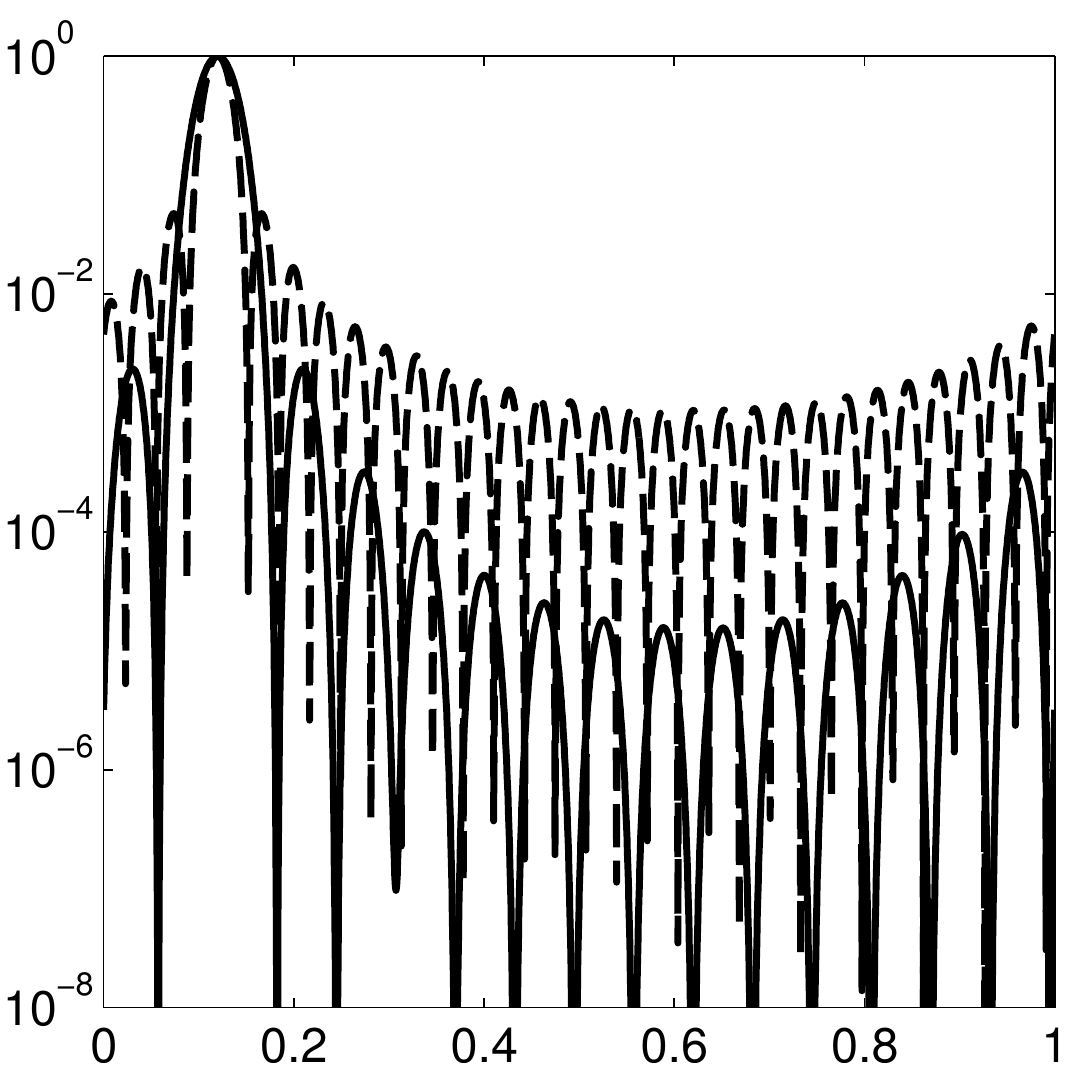}}
\caption{Parameters $d=1$, $M=3$, $n=30$,
$t_1=0.12$,
$t_2=1/\pi$,
and
$t_3=\e^{-1/2}$.
Dashed lines and $\times$ indicate no weighting,
solid lines and $\cdot$ indicate triangular weights $w_k=\min\{k+1,n-k\}$, $k=0,\hdots,n-1$.}
\label{fig:d=1}
\end{figure}
We consider the case $d=1$
with parameters on the $1$-torus $\T$ that we identify with the interval $[0,1)$.
For a $3$-sparse exponential sum
some of the associated (trigonometric) polynomials
are visualized in Figure~\ref{fig:d=1},
where we start with the upper bound $n=30\ge3$
and also indicate the effects of a preconditioner $W$ according to Theorem~\ref{thm:condT}
on the roots of the polynomials.

The method introduced in~\cite{CaFe14} finds a polynomial of the form~\eqref{SummeVonQuadraten}
as a solution to a convex optimization problem,
whereas we find
% it % Not the same polynomial, is it? /U
such a polynomial
with Prony's method.
For this comparison we used the MATLAB code provided in~\cite{CaFe14} and modified it
so that it runs for different problem sizes depending on the sparsity $M = 1,\dots,100$.
This means that we used roughly $5M$ samples and random parameters $t_j\in[0,1)$, $j = 1,\dots,M$,
satisfying the separation condition in~\cite{CaFe14}.
We only measured the time for finding a polynomial of the form~\eqref{SummeVonQuadraten},
since the calculation of the roots is basically the same in both algorithms.
In Figure~\ref{CandesVsProny}~(a),
where the times needed with~\textnormal{\texttt{cvx}} are depicted as circles
and the times needed by Prony's method are depicted as crosses,
we see that the solution via convex optimization takes considerably more time.
Note that the end criterion of the convex optimization program is set to roughly $10^{-6}$,
therefore the solution accuracy does not increase beyond this point,
whereas for Prony's method the solutions in this test are all in the order of machine accuracy,
$10^{-15}$.
 \begin{figure}[htbp]
\centering
\subfigure[Running time for Prony method~($+$)
% resp.~\textnormal{\texttt{cvx}}
resp.~super-resolution~($\circ$)
for varying number of parameters in the case $d=1$.]
  {\includegraphics[width=0.45\textwidth]{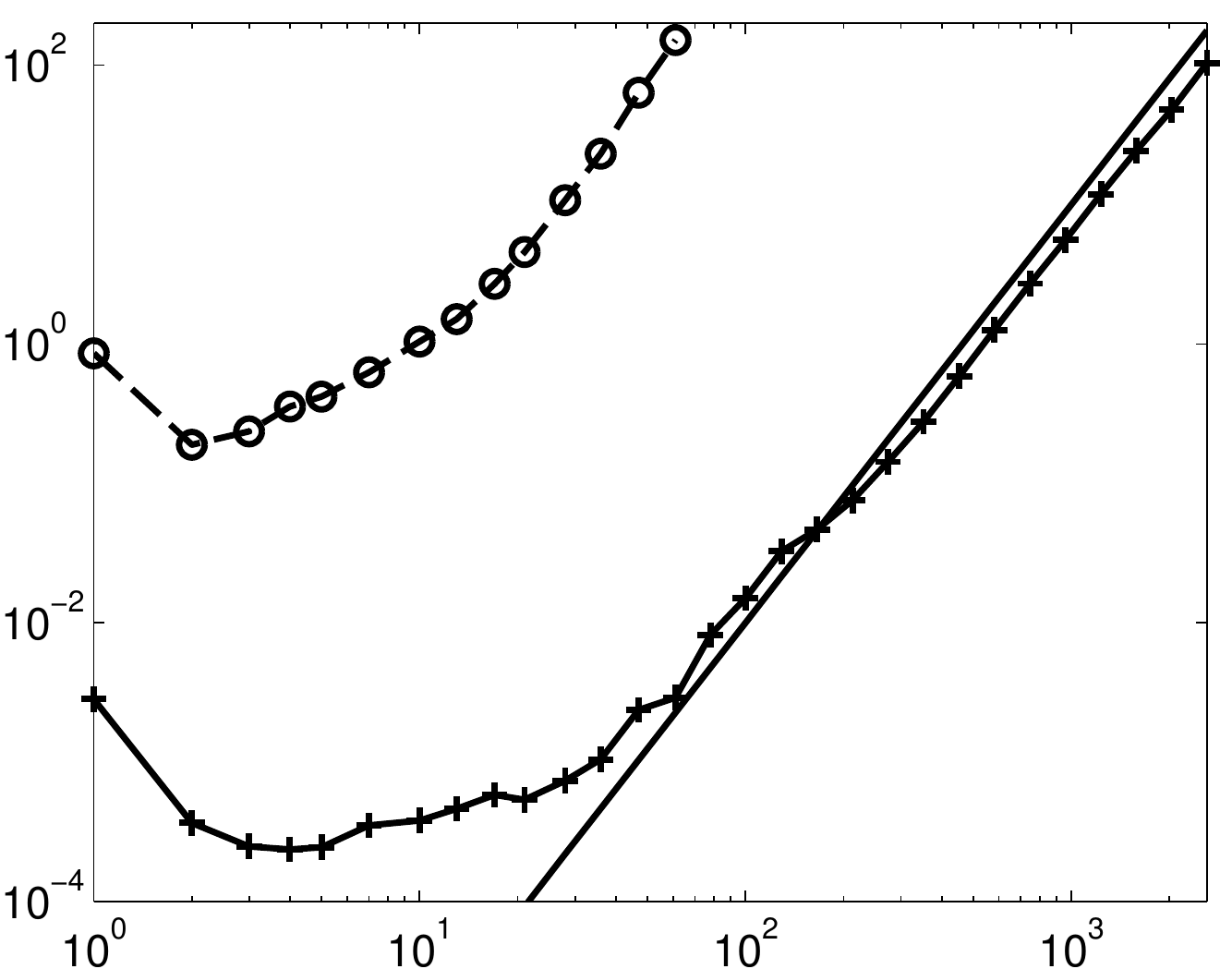}}
  \subfigure[Running time for Prony method~($+$)
% resp.~\textnormal{\texttt{cvx}}
resp.~super-resolution~($\circ$)
for varying number of parameters in the case $d=2$.]
  {\includegraphics[width=0.45\textwidth]{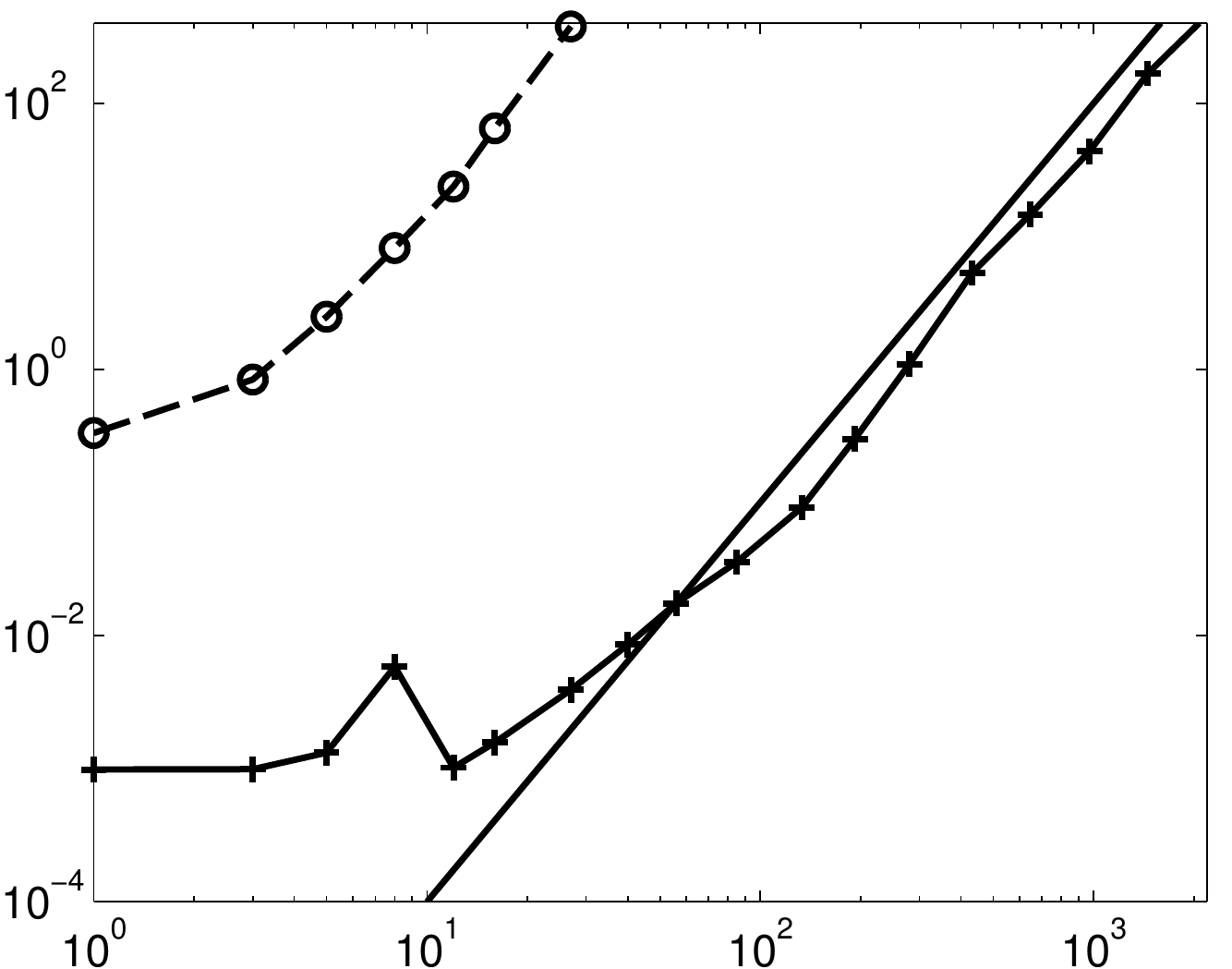}}
\caption{Time comparison for extracting a polynomial of form~\eqref{SummeVonQuadraten},
once with~\textnormal{\texttt{cvx}} depicted as circles and via Prony's method depicted as crosses.}
%Time comparison for extracting a polynomial of form~\eqref{SummeVonQuadraten} out of roughly $5M$ sampling points for different problem sizes depending on $M = 1,\dots,100$. The times needed to find such a polynomial as a solution of a convex optimization problem via \texttt{cvx} are depicted as dots, whereas the times spent for computing such a polynomial via Prony's method are depicted as crosses.}
\label{CandesVsProny}\end{figure}
\end{example}

\begin{example}[$d=2$]\label{ex:toy-example-2d}
We demonstrate our method to reconstruct the parameters from the moments
$f\colon\Z^2\to\C$,
$k\mapsto(1,1)^k+(-1,-1)^k$.
For moments of order $\lvert k\rvert\le n=2$ and the associated space of polynomials $\Pi_2$
with reverse lexicographical order on the terms,
we get the $9\times9$ block Toeplitz matrix $T=T_2$
with the Toeplitz blocks $T^\prime,T^{\prime\prime}$
as follows:
\begin{equation*}
 T=(f(k-\ell))_{k,\ell\in I_2}
  =\begin{pmatrix}
  T^\prime&T^{\prime\prime}&T^\prime\\
  T^{\prime\prime}&T^\prime&T^{\prime\prime}\\
  T^\prime&T^{\prime\prime}&T^\prime
\end{pmatrix},
\quad
T^\prime=\begin{pmatrix}
2&0&2\\
0&2&0\\
2&0&2
\end{pmatrix},
\quad
T^{\prime\prime}=\begin{pmatrix}
0&2&0\\
2&0&2\\
0&2&0
\end{pmatrix}.
\end{equation*}
A vector space basis of $\ker T$ is given by the polynomials
\begin{align*}
p_1&=-1+Z_1^2\text{, }\quad
&p_2&=-Z_1+Z_2\text{, }\quad
&p_3&=-1+Z_1Z_2\text{, }\\
p_4&=-Z_1+Z_1^2Z_2\text{, }\quad
&p_5&=-1+Z_2^2\text{, }\quad
&p_6&=-Z_1+Z_1Z_2^2\text{, }\\
p_7&=-1+Z_1^2Z_2^2\text{.}
\end{align*}
Since
$p_3=p_1+Z_1 p_2$,
$p_4=Z_1 p_3$,
$p_5=Z_2 p_2+p_3$,
$p_6=Z_1 p_5$,
and
$p_7=(1+Z_1Z_2)p_3$,
we have $\langle\ker T\rangle=\langle p_1,p_2\rangle$ and
hence $V(\ker T)=V(p_1,p_2)=\set{(1,1),(-1,-1)}$.
The zero loci of $p_1,p_2$ are depicted in
Figure~\ref{fig:toy-example-2d}~(a)
(in the style of Figure~\ref{fig:3points})
resp.~(b),
where the torus $\T^2$ is identified with $[0,1)^2$.
Note that we would typically expect the intersection
of the zero locus of each polynomial with the torus to be finite,
which is the case neither for~$p_1$ nor~$p_2$.
In Figure~\ref{fig:toy-example-2d}~(c)
the sum of the squared absolute values of an orthonormal basis
of $\ker T$ is drawn.
\begin{figure}[htbp]
\centering
\hfill
\subfigure[][See also Fig.~\ref{fig:3points}, zero loci $V(p_1),V(p_2)\subset\C^2$.]{
%  \protect\label{subfig:toy-example-2d-pictorial}
  %\fbox{\begin{tikzpicture}[scale=\textwidth/(4*5cm)]
  \fbox{\begin{tikzpicture}[scale=0.85]
% dummy
\draw[-,very thick,draw=white] (-2,-2)--(2,2);

% Should there be a "coordinate system" in the picture?
% Without one the difference between "pictorial" and "torus" is unclear.
% \draw[->,very thick] (-2.5,0)--(2.5,0);
% \draw[->,very thick] (0,-2)--(0,2);

% p_1 = 0:
\draw[-,very thick,draw=blue] (1,-2)--(1,2);
\draw[-,very thick,draw=blue] (-1,-2)--(-1,2);

% p_2 = 0:
\draw[dashed,very thick,draw=blue] (-2,-2)--(2,2);
% \draw[-,very thick,draw=red] (-2.5,-1)--(2.5,-1);

\node[circle,draw=black,fill=violet,minimum size=2mm,inner sep=0mm] at (-1,-1) {};
\node[circle,draw=black,fill=violet,minimum size=2mm,inner sep=0mm] at (1,1) {};
\end{tikzpicture}}}
\hfill
  \subfigure[Zeros of the kernel polynomial $p_1$ (lines) and $p_2$ (dashed) on $\T^2$.]{\includegraphics[width=0.3\textwidth]{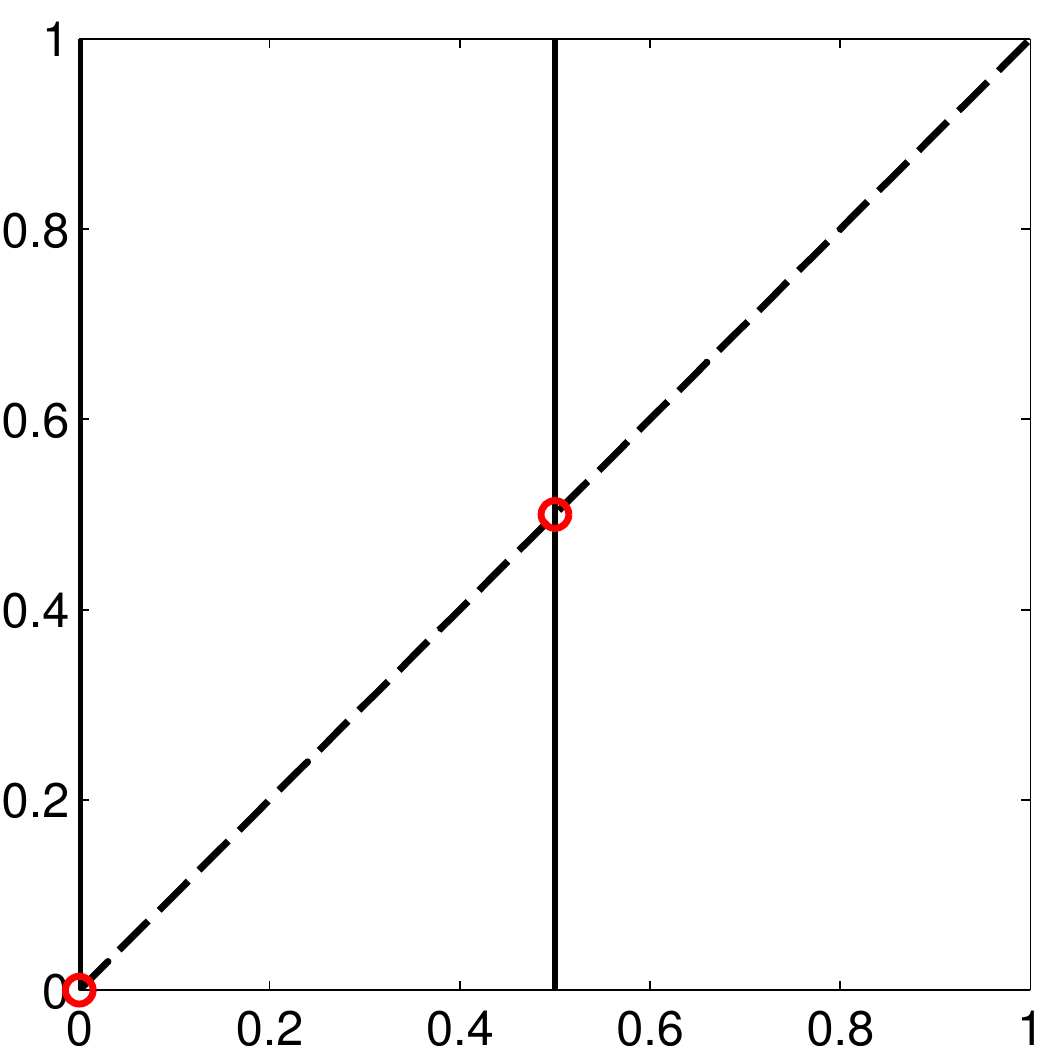}}
  \subfigure[Sum of squared absolute values of kernel polynomials.]{\includegraphics[width=0.3\textwidth]{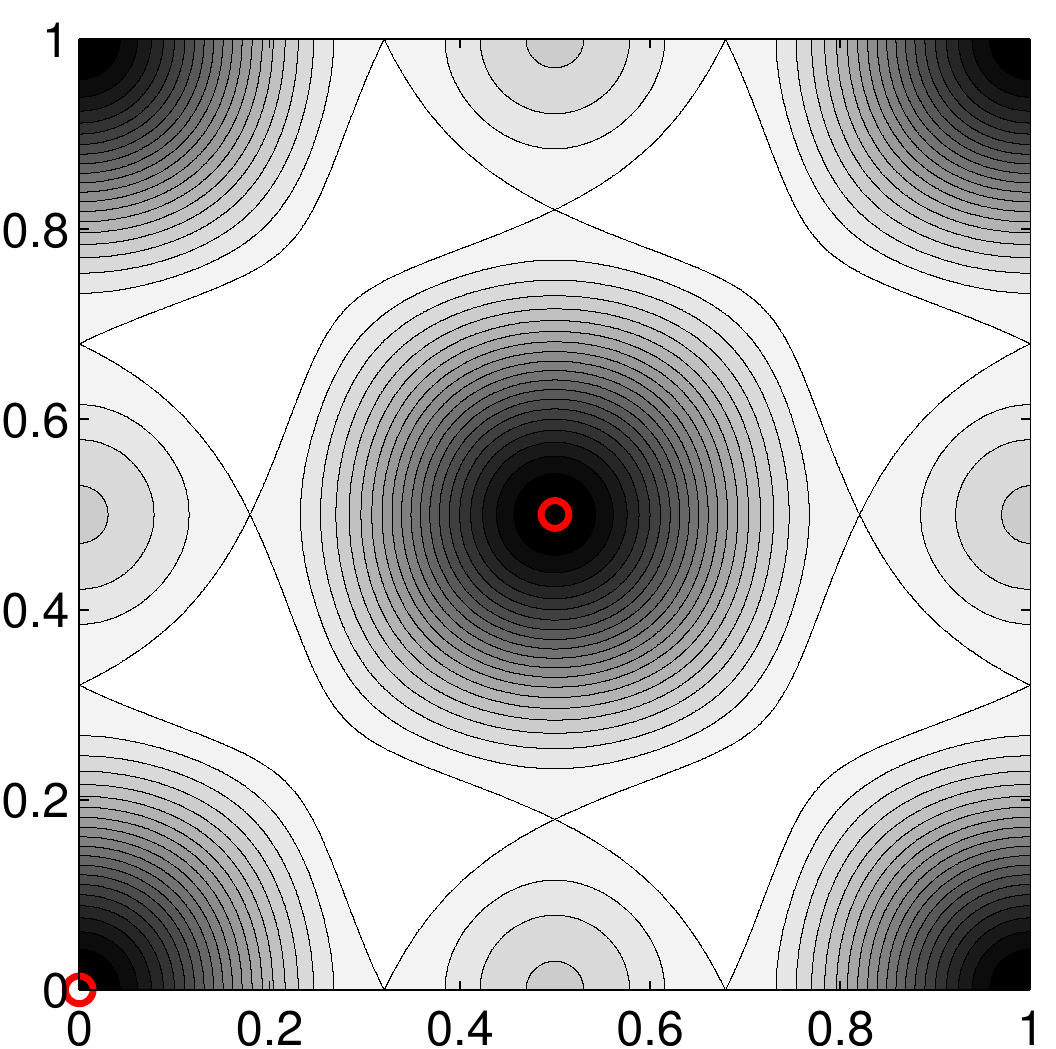}}
\caption{Parameters $d=2$, $M=2$, $n=2$,
$t_1=(0.0,0.0)$ and $t_2=(0.5,0.5)$.
}
\label{fig:toy-example-2d}
\end{figure}

\end{example}

\begin{example}[$d=3$]
 \begin{figure}[htbp]
\centering
  \subfigure[Zeros of the kernel polynomial $p_1$ on $\T^3$.]
  {\includegraphics[width=0.45\textwidth]{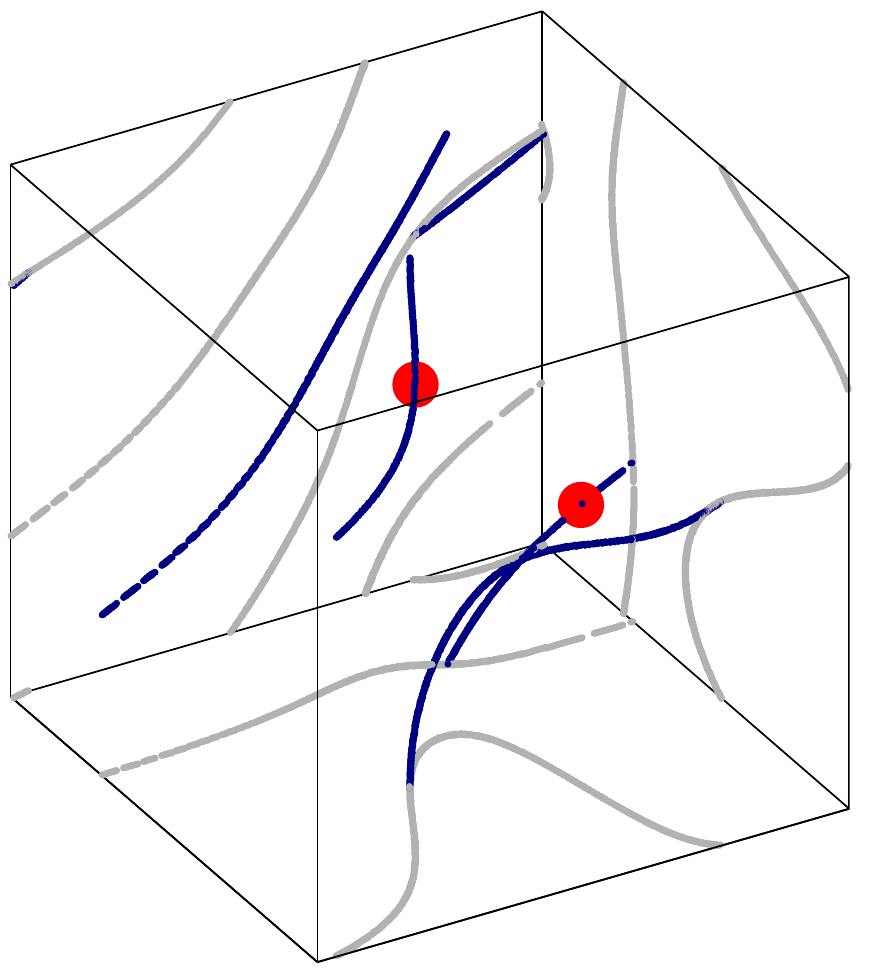}}
  \subfigure[Zeros of the kernel polynomials $p_1$, $p_2$ on $\T^3$.]
  {\includegraphics[width=0.45\textwidth]{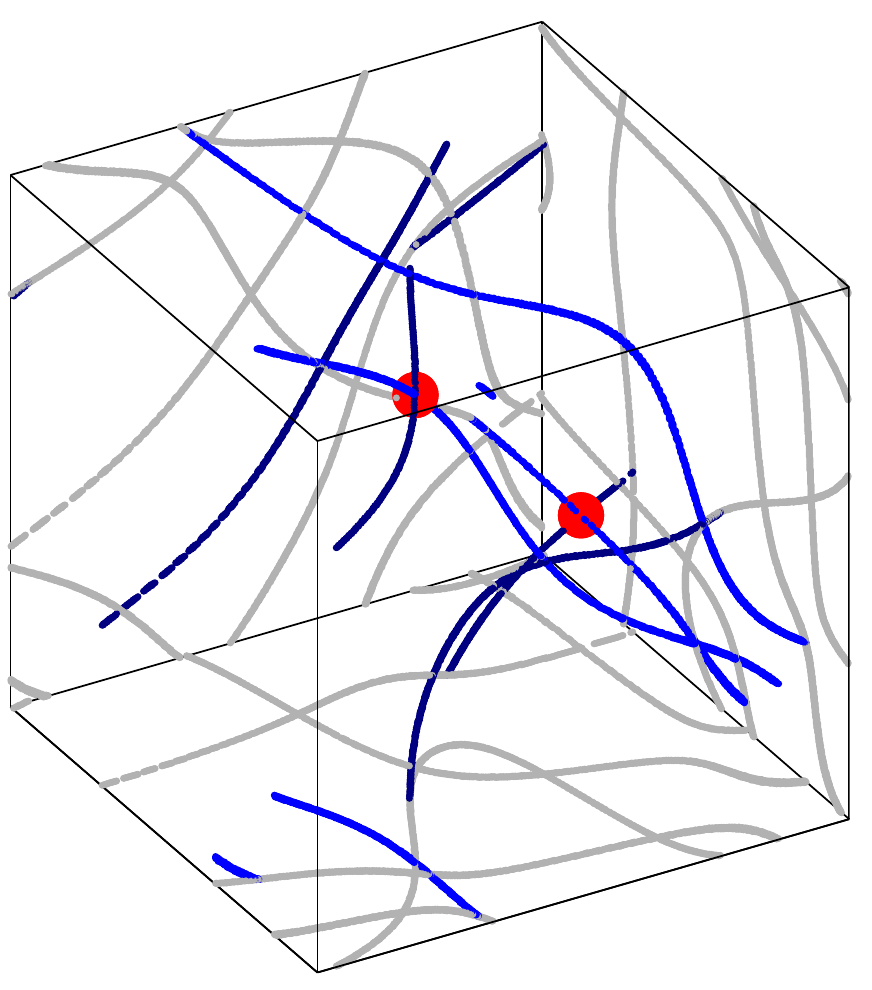}}
\caption{Parameters $d=3$, $M=2$, $n=1$, $t_1=(0.1,0.3,0.25)$ and $t_2=(0.7,0.8,0.9)$.}
\label{fig:d=3}
\end{figure}
Figure~\ref{fig:d=3}
depicts the intersection of $\T^3$
(identified with $[0,1)^3$)
and the zero loci of two polynomials
that arise with the Prony method for $M=2$ parameters
choosing $n=1$
(which is not an upper bound for $M$).
This illustrates that,
in the case $d=3$,
the zero locus of a single polynomial intersected with the torus
can typically be visualized as a ``one-dimensional'' curve
as suggested by the heuristic argument that
a complex polynomial can be thought of as two real equations,
which together with the three real equations
that define $\T^3$ as a subset of $\C^3=\R^6$
provides five equations,
thus leaving one real degree of freedom.
\end{example}

%%%%%%%%%%%%%%%%%%%%%%%%%%%%%%%%%%%%%%%%%%%%%%%%%%%%%%%%%%%%%%%%%%%%%%%%%%%%%%
\section{Summary}\label{sect:sum}

We suggested a multivariate generalization of Prony's method and gave sharp conditions
under which the problem admits a unique solution.
Moreover, we provided a tight estimate on the condition number for computing the kernel
of the involved Toeplitz matrix of moments.
Numerical examples were presented for spatial dimensions $d=1,2,3$ and showed in particular that
a so-called dual certificate in the semidefinite formulation of the moment problem can be computed
much faster by solving an eigenvalue problem.

Beyond the scope of this paper, future research needs to address
the actual computation of the common roots of the kernel polynomials,
the stable reconstruction from noisy moments, and
reductions both in the number of used moments as well as in computation time.

\textbf{Acknowledgment.}
The authors thank S.~Heider for the implementation of the approach~\cite{CaFe14} for the bivariate case and
H.~M.~M\"oller for several enlightening discussions.
The fourth author expresses his thanks to
J.~Abbott
for warm hospitality
during his visit in Genoa
and numerous useful suggestions.
% thank Daniel Potts, Christian Bey, Tim R\"omer, and Michael M\"oller...
%the referees for their valuable suggestions and
Moreover, we gratefully acknowledge support
by the DFG within the research training group 1916: Combinatorial structures in geometry
and by the Helmholtz Association within the young investigator group VH-NG-526: Fast algorithms for biomedical imaging.

%%%%%%%%%%%%%%%%%%%%%%%%%%%%%%%%%%%%%%%%%%%%%%%%%%%%%%%%%%%%%%%%%%%%%%%%%%%%%%

\bibliographystyle{abbrv}
\bibliography{../references}

\begin{thebibliography}{10}

\bibitem{AnCaHo10}
F.~Andersson, M.~Carlsson, and M.~V. de~Hoop.
\newblock Nonlinear approximation of functions in two dimensions by sums of
  exponential functions.
\newblock {\em Appl.~Comput.~Harmon.~Anal.}, 29:156--181, 2010.

\bibitem{Bertini}
D.~J. Bates, J.~D. Hauenstein, A.~J. Sommese, and C.~W. Wampler.
\newblock Bertini: Software for numerical algebraic geometry.
\newblock Available at \texttt{bertini.nd.edu} with permanent doi:
  \texttt{dx.doi.org/10.7274/R0H41PB5}.

\bibitem{BeTi88}
M.~Ben-Or and P.~Tiwari.
\newblock A deterministic algorithm for sparse multivariate polynomial
  interpolation.
\newblock In {\em Proceedings of the twentieth annual ACM symposium on Theory
  of computing}, pages 301 -- 309, 1988.

\bibitem{BeDeFe15a}
T.~Bendory, S.~Dekel, and A.~Feuer.
\newblock Super-resolution on the sphere using convex optimization.
\newblock {\em IEEE Trans. Signal Process.}, 64:2253--2262, 2015.

\bibitem{BeDeFe15b}
T.~Bendory, S.~Dekel, and A.~Feuer.
\newblock Exact recovery of {Dirac} ensembles from the projection onto spaces
  of spherical harmonics.
\newblock {\em Constructive Approximation}, to appear.

\bibitem{CaFe13}
E.~J. Cand{\`e}s and C.~Fernandez-Granda.
\newblock Super-resolution from noisy data.
\newblock {\em J. Fourier Anal. Appl.}, 19(6):1229--1254, 2013.

\bibitem{CaFe14}
E.~J. Cand{\`e}s and C.~Fernandez-Granda.
\newblock Towards a mathematical theory of super-resolution.
\newblock {\em Comm. Pure Appl. Math.}, 67(6):906--956, 2014.

\bibitem{CaRoTa06}
E.~J. Cand{\`e}s, J.~Romberg, and T.~Tao.
\newblock Robust uncertainty principles: exact signal reconstruction from
  highly incomplete frequency information.
\newblock {\em {\textrm{IEEE}} Trans. Inform. Theory}, 52:489--509, 2006.

\bibitem{ChScPeCa11}
Y.~Chi, L.~L. Scharf, A.~Pezeshki, and A.~R. Calderbank.
\newblock Sensitivity to basis mismatch in compressed sensing.
\newblock {\em IEEE Trans. Signal Process.}, 59:2182--2195, 2011.

\bibitem{Pr95}
B.~G.~R. de~Prony.
\newblock Essai {\'e}xperimental et analytique: sur les lois de la
  dilatabilit{\'e} de fluides {\'e}lastique et sur celles de la force expansive
  de la vapeur de l'alkool, a diff{\'e}rentes temp{\'e}ratures.
\newblock {\em Journal de l'{\'e}cole polytechnique}, 1(22):24--76, 1795.

\bibitem{Du07}
B.~Dumitrescu.
\newblock {\em Positive trigonometric polynomials and signal processing
  applications}.
\newblock Signals and Communication Technology. Springer, Dordrecht, 2007.

\bibitem{FiMhPr12}
F.~Filbir, H.~N. Mhaskar, and J.~Prestin.
\newblock On the problem of parameter estimation in exponential sums.
\newblock {\em Constr.~Approx.}, 35:323--343, 2012.

\bibitem{GiLaLe02}
M.~Giesbrecht, G.~Labahn, and W.-s. Lee.
\newblock On the equivalence between {P}rony's and {B}en-{O}r's/{T}iwari's
  methods.
\newblock {\em University of Waterloo Tech Report}, 23, 2002.

\bibitem{GiInIwSchm14}
A.~Gilbert, P.~Indyk, M.~Iwen, and L.~Schmidt.
\newblock Recent developments in the sparse {Fourier} transform: A compressed
  fourier transform for big data.
\newblock {\em IEEE Signal Proc. Mag.}, 31(5):91--100, Sept 2014.

\bibitem{HoJo13}
R.~A. Horn and C.~R. Johnson.
\newblock {\em Matrix Analysis}.
\newblock Cambridge University Press, New~York, USA, 2nd edition, 2013.

\bibitem{HuSa90}
Y.~Hua and T.~K. Sarkar.
\newblock Matrix pencil method for estimating parameters of exponentially
  damped/undamped sinusoids in noise.
\newblock {\em IEEE Trans. Acoust. Speech Signal Process.}, 38(5):814--824,
  1990.

\bibitem{DiIs15}
A.~Iske and B.~Diederichs.
\newblock Parameter estimation for bivariate exponential sums.
\newblock In {\em SampTA 2015}, to appear.

\bibitem{JiSiBe01}
T.~Jiang, N.~D. Sidiropoulos, and J.~M.~F. ten Berge.
\newblock Almost-sure identifiability of multidimensional harmonic retrieval.
\newblock {\em IEEE Trans. Signal Process.}, 49(9):1849--1859, 2001.

\bibitem{KoLo04}
V.~Komornik and P.~Loreti.
\newblock {\em Fourier Series in Control Theory}.
\newblock Springer-Verlag, New~York, 2004.

\bibitem{KuPo07}
S.~Kunis and D.~Potts.
\newblock Stability results for scattered data interpolation by trigonometric
  polynomials.
\newblock {\em SIAM~J.~Sci.~Comput.}, 29:1403--1419, 2007.

\bibitem{Ma95}
Y.~Mansour.
\newblock Randomized interpolation and approximation of sparse polynomials.
\newblock {\em \textrm{SIAM} J. Comput.}, 24:357--368, 1995.

\bibitem{MoSa00}
H.~M. M\"oller and T.~Sauer.
\newblock H--bases for polynomial interpolation and system solving.
\newblock {\em Adv.~in~Comp.~Math.}, 12:335--362, 2000.

\bibitem{PePlSc15}
T.~Peter, G.~Plonka, and R.~Schaback.
\newblock Reconstruction of multivariate signals via {P}rony's method.
\newblock {\em Proc. Appl. Math. Mech.}, to appear.

\bibitem{PlTa14}
G.~Plonka and M.~Tasche.
\newblock Prony methods for recovery of structured functions.
\newblock {\em GAMM-Mitt.}, 37(2):239--258, 2014.

\bibitem{PlWi13}
G.~Plonka and M.~Wischerhoff.
\newblock How many {F}ourier samples are needed for real function
  reconstruction?
\newblock {\em J. Appl. Math. Comput.}, 42(1-2):117--137, 2013.

\bibitem{PoTa10}
D.~Potts and M.~Tasche.
\newblock Parameter estimation for exponential sums by approximate {P}rony
  method.
\newblock {\em Signal Processing}, 90(5):1631--1642, 2010.

\bibitem{PoTa132}
D.~Potts and M.~Tasche.
\newblock Parameter estimation for multivariate exponential sums.
\newblock {\em Electron. Trans. Numer. Anal.}, 40:204--224, 2013.

\bibitem{Ra06}
H.~Rauhut.
\newblock Random sampling of sparse trigonometric polynomials.
\newblock {\em Appl. Comput. Harmon. Anal.}, 22:16--42, 2007.

\bibitem{ReScTh15}
M.~Reitzner, M.~Schulte, and C.~Th{\"a}le.
\newblock Limit theory for the {G}ilbert graph.
\newblock {\em Preprint}, 2015.

\bibitem{RoKa90}
R.~Roy and T.~Kailath.
\newblock E{SPRIT} - estimation of signal parameters via rotational invariance
  techniques.
\newblock {\em Signal {P}rocessing, {P}art {II}}, 23:369--411, 1990.

\bibitem{RuVe08}
M.~Rudelson and R.~Vershynin.
\newblock On sparse reconstruction from {F}ourier and {G}aussian measurements.
\newblock {\em Comm. Pure Appl. Math.}, 61:1025--1045, 2008.

\bibitem{Sc86}
R.~O. Schmidt.
\newblock Multiple emitter location and signal parameter estimation.
\newblock {\em Antennas and Propagation, IEEE Transactions on}, 34(3):276--280,
  1986.

\bibitem{SoBaLa14}
L.~Sorber, M.~Van~Barel, and L.~De~Lathauwer.
\newblock Numerical solution of bivariate and polyanalytic polynomial systems.
\newblock {\em SIAM J.~Numer.~Anal.}, 52(4):1551--1572, 2014.

\bibitem{Stetter_Numerical-Polynomial-Algebra}
H.~J. Stetter.
\newblock {\em Numerical Polynomial Algebra}.
\newblock Society for Industrial and Applied Mathematics, Philadelphia, 2004.

\bibitem{VeMaBl02}
M.~Vetterli, P.~Marziliano, and T.~Blu.
\newblock Sampling signals with finite rate of innovation.
\newblock {\em IEEE Trans. Signal Process.}, 50(6):1417--1428, 2002.

\end{thebibliography}
\end{document}